\documentclass{article}
\usepackage{graphicx}
\usepackage{xcolor}
\usepackage{amsmath,amsthm,amssymb}
\usepackage{comment}
\usepackage{booktabs}
\usepackage{etoolbox}
\usepackage{geometry}
\usepackage{authblk}
\title{\Large Arithmetic of elliptic curves induced by regular Diophantine triples}
\author{Nikola Ad\v zaga}
\makeatletter

\patchcmd{\@setauthors}{\scshape}{\relax}{}{}
\patchcmd{\@author}{\MakeUppercase}{\relax}{}{}
\renewcommand{\author}[1]{\def\@author{#1}}

\makeatother
\AtEndDocument{%
  \par\bigskip
  \noindent\textsc{Department of Mathematics, University of Zagreb Faculty of Civil Engineering}\par
  \noindent\emph{Email address:} \texttt{nikola.adzaga@grad.unizg.hr}\par
}

\date{30th July 2026}

\usepackage{xurl}
\usepackage{hyperref}

\usepackage{cleveref}
\usepackage[style=alphabetic]{biblatex}
\AtBeginBibliography{\sloppy}

\theoremstyle{plain}
\newtheorem{theorem}{Theorem}
\newtheorem{lemma}[theorem]{Lemma}
\newtheorem{proposition}[theorem]{Proposition}
\newtheorem{corollary}[theorem]{Corollary}

\newtheorem{remark}[theorem]{Remark}

\newtheorem{setup}[theorem]{Assumptions}

\crefname{theorem}{Theorem}{Theorems}
\crefname{lemma}{Lemma}{Lemmata}
\crefname{corollary}{Corollary}{Corollaries}
\crefname{proposition}{Proposition}{Propositions}
\crefname{definition}{Definition}{Definitions}
\crefname{definition-proposition}{Definition-Proposition}{Definition-Propositions}
\crefname{conjecture}{Conjecture}{Conjectures}
\crefname{question}{Question}{Questions}
\crefname{example}{Example}{Examples}
\crefname{algorithm}{Algorithm}{Algorithms}
\crefname{remark}{Remark}{Remarks}
\crefname{setup}{Assumptions}{Assumptions}

\newcommand{\Z}{\mathbb{Z}}
\newcommand{\Q}{\mathbb{Q}}
\newcommand{\F}{\mathbb{F}}
\newcommand{\twotimessix}{\Z/2\Z \times \Z/6\Z}

\newcommand{\tors}{\mathrm{tors}}

\begin{document}

\maketitle
\begingroup
\renewcommand{\thefootnote}{}
\footnotetext{%
\textit{2020 Mathematics Subject Classification:}
11G05, 11D09, 11G30, 11Y50.

\textit{Keywords and phrases:}
Diophantine triples, elliptic curves, torsion subgroups, quadratic fields,
generic rank, Chabauty--Coleman method, Mordell--Weil sieve.%
}
\addtocounter{footnote}{-1}
\endgroup

\begin{abstract}
    We study elliptic curves induced by regular Diophantine triples, with emphasis on their torsion subgroups. We show that an elliptic curve \(E\) induced by a regular Diophantine triple in integers necessarily has torsion subgroup \(E(\Q)_\tors \cong \Z/2\Z \times \Z/2\Z\). Moreover, we develop a criterion for when such an elliptic curve acquires a point of order \(3\) over a quadratic field. For a particular family \(\{ k-1, k+1, 4k\}\), we use it to show that this does not happen. Finally, we study both the torsion and the generic rank of a family of elliptic curves induced by the \(D(-k^2)\)-triple \(\{1, 2k^2, 2k^2+2k+1\}\).\\
\end{abstract}

\section{Introduction and the main result}

Diophantus of Alexandria studied sets of rational numbers with the property that the product of any two distinct elements, increased by \(1\), is a perfect square. 
A natural question is how large such sets can be. 
In the case of positive integers, the subject has a long history. 
Fermat found the Diophantine quadruple in positive integers, namely \(\{1,3,8,120\}\), 
while Baker and Davenport proved that if \(\{1,3,8,d\}\) is a Diophantine quadruple, then \(d=120\) \cite{BaDa}. 
In other words, the triple \(\{1,3,8\}\) has a unique extension to a quadruple. 
This result motivated a lot of research on extension problems for Diophantine triples, and more generally on the structure and size of Diophantine \(m\)-tuples. One of the highlights of this decades-long research is a proof by He, Togb\'e and Ziegler that there does not exist a Diophantine quintuple in positive integers \cite{HeToZi}. For more on background and previous research, we recommend \cite{DujeBook,DujeWeb}.

Our approach is to place these problems in a broader arithmetic-geometric setting, with the presentation of the methods being as expository and introductory as possible. A frequent theme will be that studying the behaviour over a larger structure can be used to deduce results in the original structure: for example, behaviour over \(\Q\) is easily used to get integral conclusions, while sometimes the behaviour over quadratic fields is used to get conclusions over \(\Q\). It is also our hope to motivate researchers in the field of Diophantine \(m\)-tuples to apply these methods.

Let \(\{a,b,c\}\) be a Diophantine triple in positive integers. 
To extend it to a quadruple, one has to solve the system
\begin{equation}\label{eq:system}
ax+1=\square,\quad bx+1=\square,\quad cx+1=\square.
\end{equation}
It is therefore natural to attach to \(\{a,b,c\}\) the elliptic curve
\[
E:\ y^2=(ax+1)(bx+1)(cx+1),
\]
the \emph{induced} elliptic curve.
Every integer \(d\) for which \(\{a,b,c,d\}\) is a Diophantine quadruple gives an integral point on \(E\) (with \(x=d\)).
In this way, questions about extendibility of Diophantine triples become closely connected with  properties of the induced elliptic curves.

This point of view was developed by Dujella and others. 
In particular, in  \cite{DujeJTNB,DujeiMiljen}, it was proved that if \(a,b,c\) are positive integers, then the torsion subgroup of the induced curve \(E(\Q)_\tors\) can be only
\[
\Z/2\Z\times\Z/2\Z
\quad\text{or}\quad
\Z/2\Z\times\Z/6\Z.
\]
An analogous statement for \(D(4)\)-triples (where one changes \(1\) to \(4\), or more generally to \(n\), in the system \eqref{eq:system}) was proved by Dujella and Miki\'c \cite{DujeiMiljen}, with the single exceptional triple \(\{-1,3,4\}\), whose induced curve has torsion group \(\Z/2\Z\times\Z/4\Z\). 
On the other hand, Ono and Kwon \cite{Ono,Kwon1997} gave a parametrization of the case
\(\Z/2\Z\times\Z/6\Z\) for curves of the form
\[
y^2=x(x+M)(x+N),
\]
and this parametrization has become a basic tool in excluding the presence of \(6\)-torsion in concrete families.

In this context, several important parametric families have already been studied. For the regular family \(\{k-1,k+1,4k\}\), Dujella \cite{DujeParametric} proved that the induced curve has torsion group \(\Z/2\Z\times\Z/2\Z\). 
For the family \(\{1,3,c\}\), Dujella and Pethő \cite{DujePetho} showed that the induced curve again has torsion group \(\Z/2\Z\times\Z/2\Z \). This result was then significantly extended by Mikić in \cite{MikicRMJ2015}. The same holds for the Fibonacci family \(\{F_{2k},F_{2k+2},F_{2k+4}\}\) \cite{DujeJTNB}.

Together with the fact that, so far, no example of an elliptic curve induced by a triple of positive integers with torsion $\mathbb{Z}/2\mathbb{Z}\times\mathbb{Z}/6\mathbb{Z}$ has been found, these results suggest that the possibility of torsion $\mathbb{Z}/2\mathbb{Z}\times\mathbb{Z}/6\mathbb{Z}$ is very restricted.

In this paper we consider the regular case. 
Recall that a Diophantine triple \(\{a,b,c\}\) is called \emph{regular} if
\[
c=a+b+2r,\quad r^2=ab+1.
\]
Regular triples are a natural first choice one tries to understand.

Our main result is the following.

\begin{theorem}
For every regular integral Diophantine triple \(\{a,b,c\}\), the induced elliptic curve
\[
E:\ y^2=(ax+1)(bx+1)(cx+1)
\]
has torsion subgroup
\( E(\Q)_{\tors}\cong \Z/2\Z\times \Z/2\Z \).
\end{theorem}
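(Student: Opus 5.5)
The plan is to use the dichotomy of \cite{DujeJTNB,DujeiMiljen}: for a Diophantine triple of positive integers, \(E(\Q)_\tors\) is either \(\Z/2\Z\times\Z/2\Z\) or \(\twotimessix\). It therefore suffices to rule out a rational point of order \(3\) on \(E\) when \(c=a+b+2r\) and \(r^2=ab+1\).

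\textbf{Step 1: normalise the model.} Substituting \(x\mapsto X/(abc)\) and \(y\mapsto Y/(abc)\), and then translating one root to \(0\), turns \(E\) into
\[
Y^2=X(X+M)(X+N),\qquad M=a(c-b),\quad N=b(c-a).
\]
For a regular triple, \(c-b=a+2r\) and \(c-a=b+2r\), so \(M=a(a+2r)\) and \(N=b(b+2r)\). These are integers. Moreover, \(M\), \(N\) and \(N-M\) are nonzero, and the signs of \(a,b,c\) are controlled (we may assume \(0<a<b<c\)).

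\textbf{Step 2: the arithmetic condition for \(\twotimessix\).} The exact form of the Ono--Kwon parametrization \cite{Ono,Kwon1997} has to be taken from those papers rather than assumed here. The more robust statement, which we will use directly, is the following. Since \(E\) has full \(2\)-torsion, a rational point \(P\) of order \(3\) satisfies \(P=-2P\in 2E(\Q)\). By the standard \(2\)-descent criterion, \(x(P)\), \(x(P)+M\) and \(x(P)+N\) are then all rational squares. By Nagell--Lutz on the integral model, \(P\) is integral, so these are squares of integers, say \(u^2\), \(v^2\), \(w^2\). Writing \(P=2Q\), the halving formulas give \(x(P)\) explicitly in terms of the square roots of \(M\), \(N\) and \(N-M\) evaluated at \(Q\). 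Together with the tangent condition \(x(2P)=x(P)\), this yields the parametrization: there exist coprime integers \(s,t\), essentially a point on a conic, such that \((M,N)\) is a fixed quartic expression in \((s,t)\), up to a common square factor \(d^2\).

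\textbf{Step 3: combine with the regular-triple parametrization.} Parametrize the conic \(r^2=ab+1\). For fixed \(a\), this is the Pell-type relation between \(b\) and \(r\). Equivalently, write \(b=(r^2-1)/a\), so that \(M=a(a+2r)\) and \(N=(r^2-1)(r^2-1+2ar)/a^2\). Imposing
\[
M=d^2\,f(s,t),\qquad N=d^2\,g(s,t)
\]
and eliminating \(d\) gives one relation
\[
a(a+2r)\,g(s,t)\,a^2=(r^2-1)(r^2-1+2ar)\,f(s,t).
\]
Here \((a,r)\) are integers and \((s:t)\) is a projective parameter. For fixed \((s:t)\), this defines a curve in \((a,r)\). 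The goal is to show that every rational point on it gives a degenerate triple (\(a=0\), \(r=\pm1\), \(a=b\), or \(c\le 0\)), or else violates positivity and integrality.

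\textbf{Step 4: exclude the remaining cases (the main obstacle).} The relation is homogeneous-looking in \((a,r)\) up to the constant \(1\) in \(r^2-1\). So the first move is to compare sizes. Positivity of all of \(a,b,c\) forces \(M\) and \(N\) to be positive. By contrast, the Kwon parametrization forces the ratio \(N/M\) into a restricted interval. We then check whether the regular condition, for which \(N/M\approx (b/a)^2\) with \(b\ge a\), is compatible with that interval. If the size argument does not close the question, the fallback is to treat the relation as a genus-\(1\) (or higher) curve over \(\Q\), compute its rational points by rank or Chabauty methods, and verify that they correspond only to the degenerate triples listed above.
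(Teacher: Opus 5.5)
\textbf{There is a genuine gap: Steps~3--4 do not contain an argument, and the mechanism you propose there cannot work.}

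Your Steps~1--2 are fine. Step~2 can simply cite \cref{thm:OnoCorrected}: $M=d^2\alpha^3(\alpha+2\beta)$, $N=d^2\beta^3(\beta+2\alpha)$ with $d,\alpha,\beta\in\Z$. Positivity of $M$, $N$, $N-M$ then lets you take $d\geq 1$ and $\beta>\alpha>0$. But Steps~3--4 only describe what you hope to check.

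\begin{itemize}
\item There is no restricted interval for $N/M$. With $t=\beta/\alpha>1$ one gets $N/M=t^3(t+2)/(2t+1)$, which takes every value in $(1,\infty)$. Regular triples give $N/M=b(b+2r)/(a(a+2r))$ arbitrarily close to $1$ (the family $\{k-1,k+1,4k\}$) and arbitrarily large ($a=1$). So no interval comparison can separate the two.
\item More fundamentally, eliminating $d$ leaves a Step~3 relation that records only the ratio $N/M$. It discards both that the scale factor is the square of an integer and that $a,b$ are integers. The contradiction has to come from exactly this information.
\item The fallback does not rescue this. The relation involves $(a,r)$ and $(s:t)$, so it is a surface (an infinite family of curves indexed by $(s:t)$). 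It is not a single curve to which Chabauty--Coleman could be applied.
\end{itemize}

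\textbf{The missing idea} is to compare absolute sizes, not ratios, of two rational points on $Y^2=X(X+M)(X+N)$, and then use integrality.
\begin{itemize}
\item The $3$-torsion point has $X_0=(d\alpha\beta)^2$, with $X_0+M=(d\alpha\beta+d\alpha^2)^2$, $X_0+N=(d\alpha\beta+d\beta^2)^2$ and $(d\alpha^2)(d\beta^2)=X_0$.
\item The regular triple gives $X_1=r^2$, with $X_1+M=(r+a)^2$, $X_1+N=(r+b)^2$ and $ab=X_1-1$.
\end{itemize}

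The paper first proves $r>d\alpha\beta$ (Lemmas~\ref{lem:surfMap}--\ref{lem:BA-lt-betaalpha}). There $(M+N)/(N-M)$ is written as $F(S/D)$, and also as $F(u/v)$ minus a positive term produced by the $+1$ in $r^2=ab+1$. Since $F$ is increasing, this gives $(b+r)/(a+r)<\beta/\alpha$, which converts into $r^2>d^2\alpha^2\beta^2$.

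Now $X\mapsto\sqrt{X+M}-\sqrt{X}=M/(\sqrt{X+M}+\sqrt X)$ is strictly decreasing, so $a<d\alpha^2$ and $b<d\beta^2$. Integrality then gives $a\le d\alpha^2-1$ and $b\le d\beta^2-1$. Hence $r^2=ab+1\le(d\alpha^2-1)(d\beta^2-1)+1<d^2\alpha^2\beta^2$, which contradicts $r>d\alpha\beta$.

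The same monotonicity also gives $r>d\alpha\beta$ directly. Otherwise $a\ge d\alpha^2$ and $b\ge d\beta^2$, so $ab\ge d^2\alpha^2\beta^2\ge r^2$, contradicting $ab=r^2-1$. So the ingredients you already noted in Step~2 (the integer square roots at the $3$-torsion point) suffice once you compare them with the point $X=r^2$ coming from the triple.
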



We then move on to study torsion of the induced elliptic curves over quadratic fields, giving a criterion (Proposition \ref{prop:lambda-parametrization}) for when such an elliptic curve acquires a point of order \(3\). We apply this to elliptic curves induced by \( \{k-1, k+1, 4k\} \), for integer \(k \).

Finally, we study both the torsion and the generic rank of an elliptic curve induced by a \(D(-k^2)\)-triple \( \{1, 2k^2, 2k^2+2k+1 \} \), finding further support for a conjecture posed in \cite{AF} on possible extensions of such triples. We prove that the generic rank of this induced curve is \(0\).

While the first part dealing with \(\Q\)-torsion of elliptic curves induced by regular triples in integers can be seen as elementary, for the later parts we had to use some code in Magma, which is available at
\url{https://github.com/NikolaAdzaga/TorsionRegularTriples}.


\section{Setup and divisibility conclusions}

Without loss of generality, we can assume \(a < b < c\), and moreover, by the following, we can even restrict our proofs to positive integers.

\begin{remark}
    Since an integral Diophantine triple has one sign only, we can change that sign. The two curves $E: y^2=(ax+1)(bx+1)(cx+1)$ and $E': y^2=(x+ab)(x+bc)(x+ca)$ are isomorphic over $\Q$, so we use them interchangeably. Hence, changing the signs of $a,b,c$ (and $x$ if we use $E$ rather than $E'$) shows we can focus on triples in positive integers.
\end{remark}

We start from the parametrization of the \( \Z/2\Z\times\Z/6\Z \)-torsion.

\begin{theorem}[{\protect{\cite[Main Theorem 1]{Ono}, \cite{Kwon1997}}}]\label{thm:OnoCorrected}
Let \( E(M,N): y^2=x(x+M)(x+N)\), with \(M,N\in \Z\) and \(MN(M-N)\neq 0\).
If \( E(\Q)_{\tors}\cong \Z/2\Z\times \Z/6\Z\), then there exist integers \(d\neq 0\), \(\alpha,\beta\in \Z\) with
\(\gcd(\alpha,\beta)=1\) such that
\( M=d^2\,\alpha^3(\alpha+2\beta),\, N=d^2\,\beta^3(\beta+2\alpha) \).
\end{theorem}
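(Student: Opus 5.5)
The plan is to use the fact that a point of order \(3\) is divisible by \(2\) in \(E(\Q)\), apply the classical \(2\)-descent criterion for \(y^2=x(x+M)(x+N)\), and then parametrize the resulting conic-type condition.

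\textbf{Step 1 (the point of order 3 is a double).} The group \(\Z/2\Z\times\Z/6\Z\) contains a rational point \(P\) of order \(3\). Since \(P=-2P=2(-P)\), we have \(P\in 2E(\Q)\). I invoke the standard halving lemma for curves with full rational \(2\)-torsion \(y^2=(x-e_1)(x-e_2)(x-e_3)\). It says that \((x_0,y_0)\in 2E(\Q)\) if and only if each \(x_0-e_i\) is a rational square. Moreover, writing \(x_0-e_i=r_i^2\), the halves of the point have \(x\)-coordinates \(x_0+r_1r_2+r_1r_3+r_2r_3\) (up to sign choices of the \(r_i\)). With \(e_i=0,-M,-N\) this gives \(x(P)=u^2\), \(x(P)+M=v^2\), \(x(P)+N=w^2\) with \(u,v,w\in\Q\). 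Here \(uvw\neq0\), because \(P\) is not \(2\)-torsion.

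\textbf{Step 2 (the order-3 condition).} The point \(-P\) is a half of \(P\), and \(x(-P)=x(P)\). So for a suitable choice of signs of \(u,v,w\) we get \(uv+uw+vw=0\). Note that \(v+w\neq 0\): otherwise \(vw=0\). Hence
\[
u=-\frac{vw}{v+w},
\]
and a direct computation gives
\[
M=v^2-u^2=\frac{v^3(v+2w)}{(v+w)^2},\qquad N=w^2-u^2=\frac{w^3(w+2v)}{(v+w)^2}.
\]
Now write \(v=\lambda\alpha\) and \(w=\lambda\beta\) with \(\lambda\in\Q^\times\), \(\alpha,\beta\in\Z\) and \(\gcd(\alpha,\beta)=1\). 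This yields \(M=d^2\alpha^3(\alpha+2\beta)\) and \(N=d^2\beta^3(\beta+2\alpha)\), where \(d=\lambda/(\alpha+\beta)\in\Q^\times\).

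\textbf{Step 3 (integrality of \(d\)); this is the only step needing care.} Write \(d=p/q\) in lowest terms. Since \(M,N\in\Z\), \(q^2\) divides both \(A=\alpha^3(\alpha+2\beta)\) and \(B=\beta^3(\beta+2\alpha)\). Using \(\gcd(\alpha,\beta)=1\):
\begin{itemize}
\item \(\gcd(\alpha,\beta^3)=1\);
\item any common divisor of \(\alpha\) and \(\beta+2\alpha\) divides \(\beta\), so it is \(1\);
\item symmetrically, \(\gcd(\beta,\alpha+2\beta)=1\).
\end{itemize}
Hence \(\gcd(A,B)\) divides \(\gcd(\alpha+2\beta,\beta+2\alpha)\). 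This in turn divides \(2(\alpha+2\beta)-(\beta+2\alpha)=3\beta\), and therefore divides \(3\). A square dividing \(3\) is \(1\), so \(q=1\) and \(d\in\Z\).

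The hypothesis \(MN(M-N)\neq0\) only guarantees that \(E(M,N)\) is an elliptic curve with full \(2\)-torsion, so no degenerate cases need to be handled. I expect the main obstacle to be justifying the halving formula in Step 1 with the correct sign bookkeeping. I would cite it from a standard text (e.g.\ Knapp, or Husemöller) rather than rederive it.
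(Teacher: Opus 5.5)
Your proof is correct, but it is a different route from the paper. The paper gives no argument of its own: it cites Ono and Kwon and adds a remark patching Ono's proof. In that patch, the point of order $3$ lies on the line $x=d'\alpha^2\beta^2$ rather than $x=\alpha^2\beta^2$, and substituting into the curve forces $d'=d^2$. Coprimality of $\alpha,\beta$ is arranged afterwards by absorbing a common prime $p$ into $d$ as $p^2$, since then $p^4\mid M,N$.

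Your derivation from $P=2(-P)$ and the halving formula is self-contained and makes that correction transparent. The $x$-coordinate of the $3$-torsion point comes out as $u^2=d^2\alpha^2\beta^2$, so the square factor appears automatically.

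You handle the normalizations in the opposite order:
\begin{itemize}
\item You impose $\gcd(\alpha,\beta)=1$ from the outset by writing $v/w$ in lowest terms. You then have to show $d\in\Z$, which your observation $\gcd\bigl(\alpha^3(\alpha+2\beta),\beta^3(\beta+2\alpha)\bigr)\mid 3$ does neatly.
\item In the paper's (Ono's) version the parameters are integral from the start, and coprimality comes from the trivial absorption step.
\end{itemize}
Your argument also shows that a single rational point of order $3$ suffices as the hypothesis.

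The step you flag as delicate is that the four sign classes of $(r_1,r_2,r_3)$ give exactly the $x$-coordinates of the four halves of $P$. This is established in the proof of the halving criterion in Husem\"oller or Knapp (cf.\ Theorem~\ref{thm:halvingCrit}), so citing it is fine. If you prefer to avoid it, impose $x(2P)=x(P)$ via the duplication formula $x(2P)=(x_0^2-MN)^2/(4y_0^2)$ and substitute $x_0=u^2$, $x_0+M=v^2$, $x_0+N=w^2$. This gives $u^2(v\pm w)^2=v^2w^2$, which after adjusting the signs of $u$ and $w$ is exactly your relation $uv+uw+vw=0$.
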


\begin{remark}
In Ono's proof of the \( \Z/2\Z\times \Z/6\Z \) case, the line
\(x=a^2b^2\) should be \(x=d'a^2b^2\) for some integer \(d'\), but substituting into the equation of $E$ shows it has to be a square $d'=d^2$. Coprimality of \( \alpha, \beta\)  (these are Ono's \(a, b\) in our notation) can be ensured since, if prime $p|\alpha, \beta$, then $p^4|M, N$, and hence $p^2$ can be pulled into $d$.
Since only \(d^2\) appears, we can assume \(d > 0\).

\end{remark}

\begin{lemma}\label{lem:bezRegularnosti}
Let \(a,b,c\in \Z_{>0}\) with \(a<b<c\), and let \(E: y^2=(ax+1)(bx+1)(cx+1)\).
Assume \(E(\Q)_{\tors}\cong \Z/2\Z\times \Z/6\Z\), and set
\(M=a(c-b)\), \(N=b(c-a)\). Then
\[
c(b-a)=N-M=d^2(\beta-\alpha)(\alpha+\beta)^3.
\]
After replacing \((\alpha,\beta)\) by \((-\alpha,-\beta)\) if necessary, we may assume \(\beta>\alpha > 0\).
\end{lemma}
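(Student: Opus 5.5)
The plan is to bring $E$ into the shape $y^2=x(x+M)(x+N)$ required by Theorem~\ref{thm:OnoCorrected}, apply that theorem, and then use sign information coming from $0<a<b<c$.

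First, by the Remark, $E$ is isomorphic over $\Q$ to $E': y^2=(x+ab)(x+bc)(x+ca)$. Substituting $x\mapsto x-ab$ (a translation, so again a $\Q$-isomorphism) gives
\[
y^2 = x\bigl(x+b(c-a)\bigr)\bigl(x+a(c-b)\bigr) = x(x+M)(x+N),
\]
with $M=a(c-b)$ and $N=b(c-a)$. Since isomorphisms over $\Q$ preserve the torsion subgroup, $E(M,N)(\Q)_\tors\cong\Z/2\Z\times\Z/6\Z$. The nondegeneracy hypothesis $MN(M-N)\neq0$ of Theorem~\ref{thm:OnoCorrected} holds: $M,N>0$ because $a<b<c$, and $N-M=bc-ab-ac+ab=c(b-a)>0$.

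Theorem~\ref{thm:OnoCorrected} then provides $d\neq0$ and coprime $\alpha,\beta$ with $M=d^2\alpha^3(\alpha+2\beta)$ and $N=d^2\beta^3(\beta+2\alpha)$. One should check that the labelling of $M$ and $N$ does not matter. This holds because the parametrization is symmetric under $\alpha\leftrightarrow\beta$. Subtracting, we get
\[
N-M=d^2\bigl(\beta^4+2\alpha\beta^3-\alpha^4-2\alpha^3\beta\bigr).
\]
Expanding $(\beta^2-\alpha^2)(\beta+\alpha)^2$ gives exactly this bracket, so $N-M=d^2(\beta-\alpha)(\alpha+\beta)^3$. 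Combined with $N-M=c(b-a)$, this is the displayed identity.

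The only slightly delicate step is the normalization of signs. Both $M$ and $N$ are homogeneous of degree $4$ in $(\alpha,\beta)$, so replacing $(\alpha,\beta)$ by $(-\alpha,-\beta)$ changes neither. We may therefore assume $\alpha+\beta>0$; note $\alpha+\beta\neq0$, since $N-M\neq0$. From $N-M>0$ we then get $\beta-\alpha>0$. It remains to show $\alpha>0$. We have $\alpha\neq0$, since otherwise $M=0$. Suppose $\alpha<0$. Then $\beta>-\alpha>0$, so $\alpha+2\beta=(\alpha+\beta)+\beta>0$, and hence $M=d^2\alpha^3(\alpha+2\beta)<0$. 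This contradicts $M>0$, so $\beta>\alpha>0$.
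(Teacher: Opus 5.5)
Your proof is correct and follows the paper's argument. Like the paper, you translate $y^2=(x+ab)(x+bc)(x+ca)$ by $x\mapsto x-ab$ to get $y^2=x(x+M)(x+N)$, apply Theorem~\ref{thm:OnoCorrected}, factor $N-M$, and normalize signs using $M,N,N-M>0$; your explicit check that $\alpha<0$ would force $M<0$ (via $\alpha+2\beta=(\alpha+\beta)+\beta>0$) simply spells out the step the paper compresses into ``since $M>0$''.
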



\begin{proof}
Apply \cref{thm:OnoCorrected} to a model obtained from $y^2=(x+ab)(x+bc)(x+ca)$ after substituting $x \mapsto x-ab$, i.~e.~\(y^2=x(x+M)(x+N)\).
The identity \(N-M=b(c-a)-a(c-b)=c(b-a)\) is immediate, and
\(d^2\left(\beta^3(\beta+2\alpha)-\alpha^3(\alpha+2\beta)\right) = d^2(\beta-\alpha)(\alpha+\beta)^3\).

Since \(a<b<c\) and \(c>0\), we have that \(M,N\), and \(N-M\) are positive, so
\((\beta-\alpha)(\alpha+\beta)^3>0\). Replacing \((\alpha,\beta)\) by \((-\alpha,-\beta)\) if needed, we may assume \(\alpha+\beta>0\), and then \(\beta-\alpha>0\). Since \(M>0\), we conclude \(\beta>\alpha > 0\).
\end{proof}

\begin{setup}\label[setup]{set:regular2x6tors}
Assume that \(\{a,b,c\}\) is a regular integral Diophantine triple in positive integers $a<b<c$, so \(ab+1=r^2\) and \(c=a+b+2r\), where \(r > 0\). Assume that the induced curve \(E: y^2=(ax+1)(bx+1)(cx+1)\) satisfies
\( E(\Q)_{\tors}\cong \Z/2\Z\times \Z/6\Z \).
\end{setup}


 In his PhD thesis, Mikić has shown that, if an induced curve has \(\Z/2\Z\times\Z/6\Z\) torsion, then $a$ and $b$ are even \cite[Teorem 2.19]{Miljen}, and the rank of such a curve is positive \cite[Lema 2.20]{Miljen}. Originally we had shown that \(\gcd(c, b-a) = 2\) (by first showing that it divides \(4\) and analyzing further). This led us to the proof we present in the next section, but in the end, we do not use those statements directly, so we decided to omit those unnecessary elementary proofs.

\section{Change of parameters and the contradiction}

The following lemma is crucial for our proof. 
\begin{lemma}[Change of Parameters Lemma]\label{lem:surfMap}
Let $u = c/2$ and $v = (b-a)/2$. 
Then $M$ and $N$ satisfy:
\[ N-M = 4uv \]
\[ M+N = \frac{3u^4 + 6u^2v^2 - v^4 - 2u^2 + 2v^2 - 1}{2u^2} \]
Furthermore, let $S = \beta+\alpha$ and $D = \beta-\alpha$. Then
\[ N-M = d^2 D S^3 \]
\[ M+N = d^2 \frac{3S^4 + 6S^2D^2 - D^4}{8} \]
\end{lemma}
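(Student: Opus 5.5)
We verify the two pairs of identities by direct algebra, treating the first pair as polynomial identities in the regular-triple parameters and the second as an identity in \(\alpha,\beta\) from \cref{thm:OnoCorrected}.

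\textbf{The identity \(N-M=4uv\).} We already have \(N-M=c(b-a)\) from \cref{lem:bezRegularnosti}. Substituting \(c=2u\) and \(b-a=2v\) gives \(N-M=4uv\).

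\textbf{The formula for \(M+N\).} Compute
\[
M+N=a(c-b)+b(c-a)=c(a+b)-2ab.
\]
We must express \(a+b\) and \(ab\) through \(u\) and \(v\), using regularity \(c=a+b+2r\) with \(r^2=ab+1\).

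First, \((a+b)^2-(b-a)^2=4ab=4(r^2-1)\), so \((a+b)^2=4v^2+4r^2-4\).

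Second, \(a+b=c-2r=2u-2r\). Squaring gives \(4(u-r)^2=4v^2+4r^2-4\), i.e. \(u^2-2ur=v^2-1\). Hence
\[
r=\frac{u^2-v^2+1}{2u},
\]
and the division by \(u\) is legitimate since \(c>0\).

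Now \(a+b=2u-2r\) and \(ab=r^2-1\), so
\[
M+N=2u(2u-2r)-2(r^2-1)=4u^2-4ur-2r^2+2.
\]
Substituting the expression for \(r\):
\[
4ur=2(u^2-v^2+1),\qquad 2r^2=\frac{(u^2-v^2+1)^2}{2u^2}.
\]
Putting everything over the common denominator \(2u^2\), the numerator is
\[
2u^2\bigl(2u^2+2v^2\bigr)-(u^2-v^2+1)^2.
\]
Expanding \((u^2-v^2+1)^2=u^4+v^4+1-2u^2v^2+2u^2-2v^2\), the numerator becomes
\[
3u^4+6u^2v^2-v^4-2u^2+2v^2-1,
\]
as claimed.

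\textbf{The identities in \(S\) and \(D\).} The relation \(N-M=d^2DS^3\) is just \cref{lem:bezRegularnosti} rewritten in \(S,D\). For the sum, set \(\beta=(S+D)/2\) and \(\alpha=(S-D)/2\). Then \(\alpha+2\beta=(3S+D)/2\) and \(\beta+2\alpha=(3S-D)/2\), so
\[
M+N=\frac{d^2}{16}\Bigl[(S-D)^3(3S+D)+(S+D)^3(3S-D)\Bigr].
\]
The two products are exchanged by \(D\mapsto -D\), so only even powers of \(D\) survive. Expanding \((S+D)^3(3S-D)=3S^4+8S^3D+6S^2D^2-D^4\) (plus the odd term \(-D^3S\cdot0\) after collecting), and doubling the even part, gives \(6S^4+12S^2D^2-2D^4\). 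Dividing by \(16\) yields
\[
M+N=d^2\,\frac{3S^4+6S^2D^2-D^4}{8}.
\]

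\textbf{Main obstacle.} The only real step is eliminating \(a,b,r\) in favour of \(u,v\); everything else is routine expansion. Solving for \(r\) via the squared relation is the key move, and it is valid because \(u\neq 0\). Note that no integrality of \(u\) or \(v\) is needed here, since the identities hold over \(\Q\).
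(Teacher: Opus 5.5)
Your proof is correct and takes essentially the same route as the paper. Both arguments eliminate $r$ via $2ur=u^2-v^2+1$, reduce to $M+N=2u^2+2v^2-2r^2$, and then substitute $\alpha=(S-D)/2$, $\beta=(S+D)/2$; the paper reaches the $r$-relation slightly more directly, by writing $a=u-v-r$, $b=u+v-r$, which also gives $M=(u-v)^2-r^2$ and $N=(u+v)^2-r^2$ at once.
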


\begin{remark}
    Before the proof, notice the similar shape of the two statements for the sum: expressions in the numerators are similar, but with different variables.
\end{remark}

\begin{proof}
By regularity, $a = u-v-r$ and $b = u+v-r$. Substituting into $ab+1=r^2$ yields $2ur = u^2 - v^2 + 1$. 
Mapping $M = a(a+2r) = (u-v)^2 - r^2$ and $N = b(b+2r) = (u+v)^2 - r^2$ immediately gives $N-M = 4uv$.
The sum is $M+N = 2u^2 + 2v^2 - 2r^2$, and then substituting $2r^2 = \frac{(u^2-v^2+1)^2}{2u^2}$ gives the first statement for $M+N$.
Now substitute $\alpha = \frac{S-D}{2}$ and $\beta = \frac{S+D}{2}$ into $M = d^2 \alpha^3(\alpha+2\beta)$ and $N = d^2 \beta^3(\beta+2\alpha)$ to immediately get the last statements about sum and difference.
\end{proof}

\begin{remark}
    Since $\beta > \alpha > 0$, $\frac SD = \frac{\beta+\alpha}{\beta-\alpha}>1$.
\end{remark}

\begin{lemma}[The Ratio Inequality]\label{lem:ratioIneq}
Let $F(t) = \frac{3t^4 + 6t^2 - 1}{8t^3}$. Then $F(u/v) > F(S/D)$, which implies $u/v > S/D$.
\end{lemma}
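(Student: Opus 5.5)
We work with the ratio $(M+N)/(N-M)$ and compute it in two ways using \cref{lem:surfMap}. With the $(u,v)$-expressions we get
\[
\frac{M+N}{N-M}=\frac{3u^4+6u^2v^2-v^4-2u^2+2v^2-1}{8u^3v}
= F(u/v)\;-\;\frac{2u^2-2v^2+1}{8u^3v}.
\]
The first term comes from dividing numerator and denominator by $v^4$. With the $(S,D)$-expressions the factor $d^2$ cancels, and we get exactly
\[
\frac{M+N}{N-M}=\frac{3S^4+6S^2D^2-D^4}{8DS^3}=F(S/D).
\]
Hence
\[
F(u/v)-F(S/D)=\frac{2u^2-2v^2+1}{8u^3v}.
\]

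It therefore suffices to show that the right-hand side is positive. By the setup we have $u=c/2>0$ and $v=(b-a)/2>0$, since $a<b$. Also $u>v$, because $c>b-a$. This gives $2u^2-2v^2+1>0$, so $F(u/v)>F(S/D)$.

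For the implication, we show that $F$ is strictly increasing on $t>1$ (in fact on $t>0$). Write $F(t)=\tfrac{3t}{8}+\tfrac{3}{4t}-\tfrac{1}{8t^3}$. Then
\[
F'(t)=\frac38-\frac{3}{4t^2}+\frac{3}{8t^4}=\frac{3(t^2-1)^2}{8t^4}\ge 0,
\]
with equality only at $t=1$, so $F$ is strictly increasing on $(0,\infty)$. Both arguments lie in this interval: $S/D>1$ by the remark preceding the lemma, and $u/v>1$ by the above. Strict monotonicity together with $F(u/v)>F(S/D)$ then gives $u/v>S/D$.

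The only step needing care is the sign bookkeeping. We need $N-M>0$ to divide, and this holds as in \cref{lem:bezRegularnosti}. We also need $v>0$ and $D>0$, so that the ratios and the direction of the inequalities are correct. The algebraic identity in the first paragraph is routine expansion.
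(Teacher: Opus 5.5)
Your proposal is correct and follows the paper's proof essentially verbatim. You compute $(M+N)/(N-M)$ in both parametrizations from Lemma~\ref{lem:surfMap} to get $F(u/v)-F(S/D)=\frac{2u^2-2v^2+1}{8u^3v}>0$ using $u>v>0$, and then conclude via $F'(t)=\frac{3(t^2-1)^2}{8t^4}$; your observation that $F$ is strictly increasing on all of $(0,\infty)$ is a harmless strengthening of the paper's restriction to $t>1$.
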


\begin{proof}
Dividing $M+N$ by $N-M$ in both parameter pairs from Lemma \ref{lem:surfMap} gives
\[ F(u/v) - \frac{2u^2 - 2v^2 + 1}{8u^3v} = F(S/D) \]
By the definition of a regular triple, $c = a+b+2r$. Since $a, r > 0$, we have $c > b > b-a$. Thus, $c/2 > (b-a)/2$, meaning $u > v > 0$ holds unconditionally for all regular triples.
Since $u > v > 0$, $2u^2 - 2v^2 + 1 \geqslant 1 > 0$, making the error term strictly positive.
Thus $F(u/v) > F(S/D)$. The derivative $F'(t) = \frac{3(t^2-1)^2}{8t^4}$ is positive for $t > 1$, making $F(t)$ strictly increasing. Therefore, $u/v > S/D$.
\end{proof}

We now want to write \(u/v\) in the similar shape as \(S/D=(\beta+\alpha)/(\beta-\alpha).\)

\begin{lemma}[A structural inequality]\label{lem:BA-lt-betaalpha}
Assume \cref{set:regular2x6tors} hold, and keep the notation
\[
u=\frac c2,\quad v=\frac{b-a}{2},\quad
S=\beta+\alpha,\quad D=\beta-\alpha.
\]
Then
\( \frac{b+r}{a+r}<\frac{\beta}{\alpha} \).
In particular,
\(
r^2>d^2\alpha^2\beta^2,
\)
hence
\(
r > d \alpha\beta.
\)
\end{lemma}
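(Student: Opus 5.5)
The plan is to convert the ratio inequality of \cref{lem:ratioIneq} into the claimed bound through a Möbius change of variables. After that, the bound $r>d\alpha\beta$ should follow by cross-multiplying against the explicit forms of $M$ and $N$ from \cref{thm:OnoCorrected}.

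\emph{Step 1: rewrite both ratios.} With $c=a+b+2r$ we have
\[
u+v=\frac{c+b-a}{2}=b+r,\qquad u-v=\frac{c-b+a}{2}=a+r,
\]
so $\frac{u+v}{u-v}=\frac{b+r}{a+r}$. Likewise $\frac{S+D}{S-D}=\frac{2\beta}{2\alpha}=\frac{\beta}{\alpha}$. Thus, with $g(t)=\frac{t+1}{t-1}$, the first claim reads $g(u/v)<g(S/D)$. The function $g$ is strictly decreasing on $(1,\infty)$, since $g'(t)=-2/(t-1)^2$. We have $S/D>1$ by the remark after \cref{lem:surfMap}, and $u/v>S/D$ by \cref{lem:ratioIneq}, so both arguments lie in $(1,\infty)$ and $g(u/v)<g(S/D)$. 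This gives $\frac{b+r}{a+r}<\frac{\beta}{\alpha}$.

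\emph{Step 2: deduce $r^2>d^2\alpha^2\beta^2$.} Note that $M=a(c-b)=a(a+2r)$, hence $M+r^2=(a+r)^2$; similarly $N+r^2=(b+r)^2$. Everything is positive, so squaring the inequality from Step 1 gives
\[
\frac{N+r^2}{M+r^2}<\frac{\beta^2}{\alpha^2},
\]
that is, $\alpha^2N-\beta^2M<r^2(\beta^2-\alpha^2)$. Now substitute $M=d^2\alpha^3(\alpha+2\beta)$ and $N=d^2\beta^3(\beta+2\alpha)$:
\[
\alpha^2N-\beta^2M=d^2\alpha^2\beta^2\bigl(\beta^2+2\alpha\beta-\alpha^2-2\alpha\beta\bigr)=d^2\alpha^2\beta^2(\beta^2-\alpha^2).
\]
Since $\beta>\alpha>0$ by \cref{lem:bezRegularnosti}, we may divide by $\beta^2-\alpha^2>0$ to get $r^2>d^2\alpha^2\beta^2$. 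Taking positive square roots, using $r,d,\alpha,\beta>0$, gives $r>d\alpha\beta$.

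I expect no real obstacle. The only points needing care are recognizing $u\pm v$ as $b+r$ and $a+r$, and confirming that both arguments of $g$ lie in the region where $g$ is monotone; both are immediate from the earlier lemmas.
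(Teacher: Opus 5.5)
Your proposal is correct and follows essentially the same route as the paper: the decreasing M\"obius map $g(t)=\frac{t+1}{t-1}$ on $(1,\infty)$ turns \cref{lem:ratioIneq} into $\frac{b+r}{a+r}<\frac{\beta}{\alpha}$, and then squaring with $(a+r)^2=M+r^2$, $(b+r)^2=N+r^2$ and dividing by $\beta^2-\alpha^2$ gives the bound. The only cosmetic difference is that you apply $g$ to $u/v$ and $S/D$, using that $g$ is an involution, whereas the paper writes $u/v$ and $S/D$ as $g$ evaluated at $\frac{b+r}{a+r}$ and $\frac{\beta}{\alpha}$.
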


\begin{proof}
By Lemma \ref{lem:ratioIneq}, we have \(\frac uv>\frac SD\). Now
\[
\frac{u}{v}
=\frac{(u+v)+(u-v)}{(u+v)-(u-v)}
=\frac{(b+r)+(a+r)}{(b+r)-(a+r)},
\]
while
\(\displaystyle
\frac{S}{D}
=\frac{\beta+\alpha}{\beta-\alpha}
=\frac{\frac{\beta}{\alpha}+1}{\frac{\beta}{\alpha}-1} \).

Consider the function \(g(z)=\frac{z+1}{z-1}\), for \(z>1\). Since
\(g'(z)=\frac{-2}{(z-1)^2}<0\), the function \(g\) is strictly decreasing on
\((1,\infty)\). As
\[
\frac uv=g\!\left(\frac{b+r}{a+r}\right)
\quad\text{and}\quad
\frac SD=g\!\left(\frac{\beta}{\alpha}\right),
\]
the inequality \(\frac uv>\frac SD\) implies
\(\frac{b+r}{a+r}<\frac{\beta}{\alpha}\), i.~e. \(\alpha(b+r)<\beta(a+r)\).

Now
\[
(a+r)^2=a(a+2r)+r^2=M+r^2,
\quad
(b+r)^2=b(b+2r)+r^2=N+r^2.
\]
Squaring \(\alpha(b+r)<\beta(a+r)\) gives
\(\alpha^2(N+r^2)<\beta^2(M+r^2)\). Substituting
\[
M=d^2\alpha^3(\alpha+2\beta),
\quad
N=d^2\beta^3(\beta+2\alpha),
\]
we obtain
\[
\alpha^2 d^2\beta^3(\beta+2\alpha)+\alpha^2r^2
<
\beta^2 d^2\alpha^3(\alpha+2\beta)+\beta^2r^2.
\]
Rearranging gives
\[
d^2\alpha^2\beta^2(\beta^2-\alpha^2)
<
(\beta^2-\alpha^2)r^2.
\]
Since \(\beta>\alpha>0\), we may divide by \(\beta^2-\alpha^2\) and get
\(r^2>d^2\alpha^2\beta^2\). As all quantities are positive, this implies
\(r>d\alpha\beta\).
\end{proof}

\begin{theorem}\label{thm:no-regular-2x6}
There is no regular integral Diophantine triple \(\{a,b,c\}\) such that the induced elliptic curve
\[
E:y^2=(ax+1)(bx+1)(cx+1)
\]
has torsion subgroup
\[
E(\Q)_{\tors}\cong \Z/2\Z\times \Z/6\Z.
\]
\end{theorem}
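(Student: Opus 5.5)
The plan is to exhibit a single increasing function \(h\) of one real variable that equals \(1\) at \(t=r\) (this encodes regularity) and \(0\) at \(t=t_0:=d\alpha\beta\) (this encodes the Ono--Kwon shape of \(M,N\)). Since \(r\) and \(t_0\) are integers with \(r>t_0\) by \cref{lem:BA-lt-betaalpha}, we get \(r\ge t_0+1\), and monotonicity will force \(h(r)\ge 2t_0+1>1\), a contradiction. By the Remark on signs, a triple with all entries negative gives an isomorphic induced curve after changing signs, so it suffices to work under \cref{set:regular2x6tors}. In particular \(a<b<c\) are positive, \(d>0\) and \(\beta>\alpha>0\) by \cref{lem:bezRegularnosti}, with \(M=a(c-b)\), \(N=b(c-a)\).

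\textbf{Step 1: the function \(h\).} For \(t>0\) put \(A(t)=\sqrt{M+t^2}\), \(B(t)=\sqrt{N+t^2}\), and
\[
h(t)=t\bigl(A(t)+B(t)\bigr)-A(t)B(t)=t^2-\bigl(A(t)-t\bigr)\bigl(B(t)-t\bigr).
\]
The second form follows by expanding the product.

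\textbf{Step 2: the value at \(r\).} As in the proof of \cref{lem:BA-lt-betaalpha}, regularity gives \(M=a(a+2r)\) and \(N=b(b+2r)\). Hence \(A(r)=a+r\), \(B(r)=b+r\), and \(A(r)+B(r)=a+b+2r=c\). Therefore
\[
h(r)=cr-(a+r)(b+r)=r^2-ab=1.
\]

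\textbf{Step 3: the value at \(t_0\).} From \(M=d^2\alpha^3(\alpha+2\beta)\) we get \(M+d^2\alpha^2\beta^2=d^2\alpha^2(\alpha+\beta)^2\). From \(N=d^2\beta^3(\beta+2\alpha)\) we get \(N+d^2\alpha^2\beta^2=d^2\beta^2(\alpha+\beta)^2\). So both are squares, and \(A(t_0)=d\alpha S\), \(B(t_0)=d\beta S\) with \(S=\alpha+\beta\). Then
\[
h(t_0)=d\alpha\beta\cdot dS^2-d^2\alpha\beta S^2=0,
\]
which is equivalent to \((A(t_0)-t_0)(B(t_0)-t_0)=t_0^2\).

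\textbf{Step 4: monotonicity and conclusion.} Since \(M,N>0\), we have \(A(t)-t=M/(A(t)+t)\) and \(B(t)-t=N/(B(t)+t)\). Both are positive and strictly decreasing in \(t>0\), while \(t^2\) is strictly increasing. Hence \(h\) is strictly increasing on \((0,\infty)\). Because \(r\) and \(t_0\) are positive integers and \(r>t_0\), we have \(r\ge t_0+1\), and so
\[
h(r)\ \ge\ h(t_0+1)\ >\ (t_0+1)^2-\bigl(A(t_0)-t_0\bigr)\bigl(B(t_0)-t_0\bigr)=(t_0+1)^2-t_0^2=2t_0+1\ \ge 3.
\]
This contradicts \(h(r)=1\). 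Hence under \cref{set:regular2x6tors} no such triple exists, which proves the theorem.

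The only genuinely nontrivial input is the strict inequality \(r>d\alpha\beta\), which \cref{lem:BA-lt-betaalpha} already supplies. The step that needed care was finding the right auxiliary function: \(h(t)=t^2-(A-t)(B-t)\), which vanishes exactly at the Ono point \(t_0\) and equals \(r^2-ab\) at \(t=r\). Once \(h\) is in hand, everything else is direct verification.
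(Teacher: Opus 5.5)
Your proof is correct. It is essentially the paper's argument in compact form: $A(t)-t$ and $B(t)-t$ take the values $a,b$ at $t=r$ and $d\alpha^2,d\beta^2$ at $t=t_0$, so their monotonicity is exactly the paper's claim $a<d\alpha^2$, $b<d\beta^2$, and you apply integrality as $r\geqslant t_0+1$ where the paper uses $a\leqslant d\alpha^2-1$, $b\leqslant d\beta^2-1$. Note also that your $h$ is strictly increasing with $h(t_0)=0<1=h(r)$, so it already forces $r>t_0$ on its own, which makes the appeal to Lemma~\ref{lem:BA-lt-betaalpha} (and the Change of Parameters and Ratio Inequality lemmas behind it) unnecessary.
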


\begin{proof}
Assume the contrary. By Lemma \ref{lem:bezRegularnosti}, after replacing \((\alpha,\beta)\) by \((-\alpha,-\beta)\) if necessary, we may assume
\(
\beta>\alpha>0
\).
Also, since only \(d^2\) appears, we may take \(d>0\).

By \cref{lem:BA-lt-betaalpha}, we have
\(
r>d\alpha\beta\). We claim that
\[
a<d\alpha^2
\quad\text{and}\quad
b<d\beta^2.
\]
Suppose first that \(a\geqslant d\alpha^2\). Then
\(
a+r\geqslant d\alpha^2+r\), hence
\[
(a+r)^2\geqslant (d\alpha^2+r)^2
= r^2+2d\alpha^2r+d^2\alpha^4.
\]
Since \(r>d\alpha\beta\), we get
\(
2d\alpha^2r>2d^2\alpha^3\beta\),
so
\[
(a+r)^2>r^2+d^2\alpha^4+2d^2\alpha^3\beta
= r^2+d^2\alpha^3(\alpha+2\beta)=r^2+M=(a+r)^2,
\]
a contradiction. Therefore
\(
a<d\alpha^2\).

The argument for \(b\) is identical,
so
\( b<d\beta^2 \).

Since \(a,b\in\Z\), we have
\(
a\leqslant d\alpha^2-1\) and \(b\leqslant d\beta^2-1\).
Therefore
\[
r^2=ab+1
\leqslant (d\alpha^2-1)(d\beta^2-1)+1
= d^2\alpha^2\beta^2-d\alpha^2-d\beta^2+2.
\]
Now \(d\alpha^2\geqslant 1\) and \(d\beta^2\geqslant 2\) (because \(d\geqslant 1\), \(\alpha\geqslant 1\),
\(\beta>\alpha\)), so
\[
d\alpha^2+d\beta^2>2, \quad
\text{ and hence } 
r^2<d^2\alpha^2\beta^2,
\]
a contradiction. Therefore, no such regular integral Diophantine triple exists.
\end{proof}

\begin{corollary}
If \(\{a,b,c\}\) is a regular integral Diophantine triple, then the induced elliptic curve
\[
E:y^2=(ax+1)(bx+1)(cx+1)
\]
has torsion subgroup \( E(\Q)_{\text{tors}} \cong \Z/2\Z\times \Z/2\Z\).
\end{corollary}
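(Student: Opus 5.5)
The plan is to combine the classification of possible torsion groups with \cref{thm:no-regular-2x6}. The induced curve \(E: y^2=(ax+1)(bx+1)(cx+1)\) always has full rational \(2\)-torsion. Its three points of order \(2\) are \((-1/a,0)\), \((-1/b,0)\) and \((-1/c,0)\). These are distinct, because \(a,b,c\) are distinct and nonzero. Hence \(\Z/2\Z\times\Z/2\Z\subseteq E(\Q)_{\tors}\). By Mazur's theorem, the only torsion groups containing \(\Z/2\Z\times\Z/2\Z\) are \(\Z/2\Z\times\Z/2m\Z\) for \(m=1,2,3,4\).

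The first step is to use the result of Dujella and Miki\'c quoted in the introduction \cite{DujeJTNB,DujeiMiljen}. It states that for a Diophantine triple in positive integers the torsion group is either \(\Z/2\Z\times\Z/2\Z\) or \(\Z/2\Z\times\Z/6\Z\). This excludes \(m=2\) and \(m=4\). The cited result is stated for positive integers, so I first reduce to that case using the Remark at the start of Section~2. An integral Diophantine triple has all its elements of one sign. Replacing \(\{a,b,c\}\) by \(\{-a,-b,-c\}\) and \(x\) by \(-x\) gives an isomorphic curve over \(\Q\).

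I also have to check that regularity survives this sign change. If \(c=a+b+2r\) with \(r^2=ab+1\), then \(-c=(-a)+(-b)+2(-r)\) and \((-r)^2=(-a)(-b)+1\). So the negated triple is again regular, now with \(r\) replaced by \(-r\). Since only \(r^2=ab+1\) matters, we may take the appropriate sign of \(r\), and \cref{set:regular2x6tors} applies. After reordering so that \(a<b<c\), we are exactly in the situation of \cref{set:regular2x6tors}.

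The second step excludes the remaining case \(m=3\). This is precisely \cref{thm:no-regular-2x6}, so only \(\Z/2\Z\times\Z/2\Z\) remains.

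The only delicate point is bookkeeping: the chain of reductions must match the hypotheses of \cref{thm:no-regular-2x6}. That theorem works with positive integers, \(a<b<c\), and \(r>0\). Negating all signs gives positive integers. Reordering the elements preserves regularity, because \(c\) is the largest element: it equals \(a+b+2r\) with \(a,b,r>0\). If the original triple was negative, the largest element in absolute value plays this role after negation, and \(-r>0\) serves as the new \(r\). No further obstacle is expected.
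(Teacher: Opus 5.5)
Your proposal is correct and matches the paper's proof: you reduce to positive integers via the sign-change Remark, use Dujella's classification (only $\Z/2\Z\times\Z/2\Z$ or $\Z/2\Z\times\Z/6\Z$ for triples in positive integers), and exclude $\Z/2\Z\times\Z/6\Z$ by \cref{thm:no-regular-2x6}. Your sign-of-$r$ bookkeeping is a little loose: for example, $\{-3,-8,-1\}$ is regular with $-1=-3-8+2\cdot 5$, so $-r>0$ need not hold for a given labelling. It is fixed cleanly by noting that regularity is the symmetric condition $(a+b+c)^2=4(ab+bc+ca+1)$, and that for positive $a<b<c$ the alternative $c=a+b-2\sqrt{ab+1}$ would force $c<b$, so $r>0$ holds automatically.
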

\begin{proof}
    For elliptic curves induced by Diophantine triples in positive integers $\{a, b, c\}$, results of Dujella \cite[Theorem 2]{DujeJTNB} (see also \cite{DujeiMiljen}) show that the only possible torsion subgroups are $\Z/2\Z \times \Z/2\Z$ and $\Z/2\Z \times \Z/6\Z$. Our \cref{thm:no-regular-2x6} excludes \(\twotimessix\).
\end{proof}

\section{Torsion growth over quadratic fields}\label{sec:quadratic-growth}

Let
\[
E_{a,b,c}:\ y^2=(x+ab)(x+ac)(x+bc),
\]
where
\[
c=a+b+2r,\quad r^2=ab+1,\quad r>0.
\]
We have proven that $\Z/2\Z \times \Z/6\Z $ cannot appear over $\Q$, i.~e.
\(
E_{a,b,c}(\Q)_{\tors}\cong \Z/2\Z\times \Z/2\Z.
\)

We are now interested in whether $\twotimessix$ torsion group can appear over a quadratic field, for an elliptic curve induced by a regular Diophantine triple in (rational) integers. We use the following theorem by Najman, on how torsion can grow for any $E/\Q$.

\begin{theorem}[{\protect{\cite[Theorem 2]{najman_ratcubic}}}]\label{thm:PhiQ2}
Let \(E/\mathbb{Q}\) be an elliptic curve and let \(K\) be a quadratic field.
Then
\[
E(K)_{\tors}\cong
\begin{cases}
\mathbb{Z}/m\mathbb{Z}, & m=1,\dots,10,12,15,16,\\
\mathbb{Z}/2\mathbb{Z}\oplus \mathbb{Z}/2m\mathbb{Z}, & m=1,\dots,6,\\
\mathbb{Z}/3\mathbb{Z}\oplus \mathbb{Z}/3m\mathbb{Z}, & m=1,2,\\
\mathbb{Z}/4\mathbb{Z}\oplus \mathbb{Z}/4\mathbb{Z}. &
\end{cases}
\]
\end{theorem}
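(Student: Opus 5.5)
The plan is to deduce the list from the Kamienny--Kenku--Momose classification of torsion groups of elliptic curves over arbitrary quadratic fields, and then use the fact that \(E\) is defined over \(\Q\) to remove the groups that cannot occur for base changes. By Kamienny--Kenku--Momose, \(E(K)_\tors\) is one of the following:
\begin{itemize}
\item \(\Z/m\Z\) with \(1\leqslant m\leqslant 16\) or \(m=18\);
\item \(\Z/2\Z\oplus\Z/2m\Z\) with \(1\leqslant m\leqslant 6\);
\item \(\Z/3\Z\oplus\Z/3\Z\) or \(\Z/3\Z\oplus\Z/6\Z\);
\item \(\Z/4\Z\oplus\Z/4\Z\).
\end{itemize}
Comparing with the statement, it suffices to rule out the cyclic groups \(\Z/11\Z\), \(\Z/13\Z\), \(\Z/14\Z\) and \(\Z/18\Z\). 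The statement is only a necessary condition, so no realizability constructions are needed.

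\textbf{Step 1 (odd part).} Write \(K=\Q(\sqrt{\delta})\), let \(\sigma\) generate \(\mathrm{Gal}(K/\Q)\), and let \(E^\delta\) be the quadratic twist of \(E\) by \(\delta\). Let \(A\) be the odd-order part of \(E(K)_\tors\). Since \(2\) is invertible on \(A\), we can split
\[
P=\tfrac12(P+\sigma P)+\tfrac12(P-\sigma P).
\]
This gives \(A\cong A^{+}\oplus A^{-}\), where \(A^{+}=E(\Q)[\text{odd}]\) and \(A^{-}\cong E^\delta(\Q)[\text{odd}]\) via the twisting isomorphism. By Mazur's theorem each of \(A^{\pm}\) is cyclic of order \(1,3,5,7\) or \(9\).

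\textbf{Step 2 (orders \(11\) and \(13\)).} An element of order \(11\) or \(13\) in \(A\) would force \(E(\Q)\) or \(E^\delta(\Q)\) to contain a point of order \(11\) or \(13\). This is impossible by Mazur's theorem.

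\textbf{Step 3 (orders \(14\) and \(18\)).} Suppose \(E(K)\supseteq\Z/14\Z\) or \(\Z/18\Z\).
\begin{itemize}
\item The cyclic subgroup of order \(7\) or \(9\) lies in \(A^{+}\oplus A^{-}\), a product of cyclic groups with orders in \(\{1,3,5,7,9\}\). A cyclic subgroup of order \(9\) cannot come from \(\Z/3\Z\oplus\Z/3\Z\), so one factor must have order divisible by \(7\), respectively by \(9\).
\item Replacing \(E\) by \(E^\delta\) if necessary, which leaves \(E(K)\) unchanged up to isomorphism, we may assume this factor is \(E(\Q)\). So \(E(\Q)\) has a point of order \(7\), respectively \(9\).
\item Next we need a rational \(2\)-torsion point. \(E\) has a \(2\)-torsion point defined over \(K\), i.e. a root of the cubic \(2\)-division polynomial \(f\in\Q[x]\) lying in \(K\). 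If this root is irrational, its conjugate is also a root, so \(f\) has a quadratic factor over \(\Q\) and therefore a rational root. In either case \(E(\Q)[2]\neq 0\).
\item Hence \(E(\Q)\) contains \(\Z/14\Z\), respectively \(\Z/18\Z\), contradicting Mazur's theorem.
\end{itemize}

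The step requiring the most care is Step 3, in particular making sure the order-\(9\) element really comes from a single factor; the cyclic-subgroup argument above handles this. Everything else is bookkeeping once the Kamienny--Kenku--Momose list and Mazur's theorem are taken as inputs.
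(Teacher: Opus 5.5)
The paper gives no proof of this statement; it quotes Najman's theorem and uses it as a black box. So the comparison is between a citation and your derivation, and your derivation is correct.

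\textbf{What your argument does.} The non-cyclic groups in the Kamienny--Kenku--Momose list coincide with those in the statement, so only $\Z/11\Z$, $\Z/13\Z$, $\Z/14\Z$ and $\Z/18\Z$ need to be excluded. Your two tools handle all four cases by reducing them to Mazur's theorem:
\begin{enumerate}
\item the splitting of the odd part of $E(K)_\tors$ into $\sigma$-invariant and $\sigma$-anti-invariant pieces, identified with the odd parts of $E(\Q)_\tors$ and $E^\delta(\Q)_\tors$;
\item the observation that a rational cubic with a root in $K$ has a root in $\Q$.
\end{enumerate}
Your treatment of the order-$9$ case, via the exponent of $A^{+}\oplus A^{-}$, is sound. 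So is the swap $E\leftrightarrow E^\delta$, which is legitimate because the two curves are $K$-isomorphic. This is the standard way the list is obtained for base changes of curves over $\Q$.

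\textbf{What each approach buys.} Your version makes explicit that the statement is a formal consequence of Kamienny--Kenku--Momose and Mazur, with only elementary extra input. The citation buys brevity, and the cited work also addresses which of the groups actually occur, which neither you nor the paper needs.

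\textbf{A further remark.} For the paper's actual application, curves with full rational $2$-torsion, the groups you exclude cannot arise at all, since $E(K)\supseteq\Z/2\Z\oplus\Z/2\Z$. There the Kamienny--Kenku--Momose list alone already yields the possibilities $\Z/2\Z\oplus\Z/2n\Z$ with $n\leqslant 6$, and $\Z/4\Z\oplus\Z/4\Z$.
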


\noindent By this classification, if \(K/\Q\) is quadratic, then the only possibilities for
\(E_{a,b,c}(K)_{\tors}\) are the possible growths from \(\Z/2\Z\times \Z/2\Z\), which are
\[
\mathbb{Z}/2\mathbb{Z} \times \mathbb{Z}/2n\mathbb{Z}
\quad (n=1,2,\dots,6),
\quad
\mathbb{Z}/4\mathbb{Z} \times \mathbb{Z}/4\mathbb{Z}.
\]

Our \href{https://github.com/NikolaAdzaga/TorsionRegularTriples/blob/main/over_quadratic_fields.m}{computations} give examples of growth to
\(\Z/2\Z\times \Z/4\Z\):
\[
E_{1,3,8}\left(\Q(\sqrt{-5})\right)_{\tors}
 \cong \Z/2\Z\times \Z/4\Z,
\quad \text{and} \quad 
E_{7,9,32}\left(\Q(i)\right)_{\tors}
 \cong \Z/2\Z\times \Z/4\Z.
\]
On the other hand, in our search we found no example with quadratic
\(3\)-torsion, i.~e. no example with torsion
\(\Z/2\Z\times \Z/6\Z\) or \(\Z/2\Z\times \Z/12\Z\).
This motivates the following discussion.

\medskip

For an elliptic curve \(E\) and point \((x, y)\) of odd order \(n\), its \(x\)-coordinate is a root of the \(n\)-division polynomial of \(E\) (see, e.~g.~\cite[§13.9]{Husemoller2004}). For
\( E:\ y^2=x^3+Ax^2+Bx+C \)
the \(3\)-division polynomial is
\(
\psi_3(x)=3x^4+4Ax^3+6Bx^2+12Cx+(4AC-B^2).
\)
For the curve \(E_{a,b,c}\) we have
\( A=ab+ac+bc \), \( B=abc(a+b+c) \), \( C=(abc)^2 \),
hence
\begin{align*}
\psi_3(x)
=
& 3x^4
+4(ab+ac+bc)x^3
+6abc(a+b+c)x^2+\\
&+12a^2b^2c^2x
+a^2b^2c^2\left(4(ab+ac+bc)-(a+b+c)^2\right).
\end{align*}
Since \(c=a+b+2r\) and \(r^2=ab+1\), one checks that
\[
4(ab+ac+bc)-(a+b+c)^2=-4,
\]
so in our family \(E_{a,b,c}\), the \(3\)-division polynomial is
\begin{equation}\label{eq:psi3-family}
\psi_3(x)
=
3x^4
+4(ab+ac+bc)x^3
+6abc(a+b+c)x^2
+12a^2b^2c^2x
-4a^2b^2c^2.
\end{equation}

\begin{lemma}\label{lem:quadratic-3torsion-psi3}
Let \(F\) be a field of characteristic different from \(2\) and \(3\), and let
\(E/F\) be an elliptic curve given by \( E:\ y^2=f(x)\),
where \(f\in F[x]\) is a separable cubic. Then \(E\) has a point of order \(3\)
over an extension of \(F\) of degree at most \(2\) if and only if the
\(3\)-division polynomial \(\psi_3(x)\) has a root in \(F\).
\end{lemma}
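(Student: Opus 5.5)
We prove the two implications separately. The basic fact we use is that for a point \(P=(x_0,y_0)\in E(\overline F)\) with \(y_0\neq 0\), \(P\) has order \(3\) if and only if \(\psi_3(x_0)=0\). Indeed, \(3P=O\) is equivalent to \(2P=-P\), which says that \(x(2P)=x(P)\). Clearing denominators in the duplication formula turns this condition into \(\psi_3(x_0)=0\). Points with \(y_0=0\) have order \(2\), so they never occur here. The points of order \(3\) are therefore exactly the points \((x_0,\pm\sqrt{f(x_0)})\) with \(\psi_3(x_0)=0\). Since \(\psi_3\) has degree \(4\) and there are \(8\) points of order \(3\), each root \(x_0\) is simple and \(f(x_0)\neq 0\). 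This uses \(\mathrm{char}\,F\neq 2,3\).

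\emph{(\(\Leftarrow\))} Let \(x_0\in F\) be a root of \(\psi_3\). Put \(K=F(\sqrt{f(x_0)})\); this is \(F\) itself or a quadratic extension of \(F\). The point \(P=(x_0,\sqrt{f(x_0)})\) lies in \(E(K)\), and it has order \(3\) by the fact above. So \([K:F]\leqslant 2\), as required.

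\emph{(\(\Rightarrow\))} Let \(K/F\) be an extension with \([K:F]\leqslant 2\), and let \(P=(x_0,y_0)\in E(K)\) have order \(3\). Then \(\psi_3(x_0)=0\) and \(x_0\in K\). If \(x_0\in F\) we are done. Otherwise \(K=F(x_0)\) is quadratic over \(F\), with nontrivial automorphism \(\sigma\). Since \(E\) is defined over \(F\), the conjugate \(P^\sigma=(\sigma(x_0),\sigma(y_0))\) is also a point of order \(3\), and it lies in \(E(K)\). Its \(x\)-coordinate differs from \(x_0\), so \(P^\sigma\neq \pm P\). Therefore \(P\) and \(P^\sigma\) generate \(E[3]\cong(\Z/3\Z)^2\), which gives \(E[3]\subseteq E(K)\).

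The Weil pairing then forces a primitive cube root of unity \(\zeta_3\) to lie in \(K\). We now derive a contradiction, and this is the main obstacle. The points of \(E[3]\) lie over \(4\) distinct \(x\)-coordinates, one for each of the \(4\) subgroups of order \(3\). All of these \(x\)-coordinates lie in \(K\), and \(\psi_3\) splits over \(K\). The Galois group \(\langle\sigma\rangle\) acts on these \(4\) roots. We claim that if none of them lies in \(F\), then this action, combined with the Weil pairing, is impossible. Here is the argument. The subgroups \(\langle P\rangle\) and \(\langle P^\sigma\rangle\) are swapped by \(\sigma\). The remaining two subgroups are \(\langle P+P^\sigma\rangle\) and \(\langle P-P^\sigma\rangle\), and \(\sigma\) maps \(P+P^\sigma\) to itself and \(P-P^\sigma\) to \(-(P-P^\sigma)\). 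Hence each of these two subgroups is \(\sigma\)-stable. The \(x\)-coordinate of \(P+P^\sigma\) is fixed by \(\sigma\), so it lies in \(F\) and is a root of \(\psi_3\).

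This last step is actually direct and does not need the Weil pairing: the point \(P+P^\sigma\) is \(\sigma\)-invariant, nonzero and of order \(3\), so it lies in \(E(F)\), and its \(x\)-coordinate is an \(F\)-rational root of \(\psi_3\). The only detail to check is that \(P+P^\sigma\neq O\). This holds because \(P^\sigma\neq -P\), which follows from \(\sigma(x_0)\neq x_0\).
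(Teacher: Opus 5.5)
Your proof is correct and is essentially the paper's argument: in both, when $\sigma(P)\neq\pm P$ (your case $x_0\notin F$), the point $P+\sigma(P)$ is a nonzero $\sigma$-invariant $3$-torsion point, hence lies in $E(F)$ and gives an $F$-rational root of $\psi_3$. The middle paragraph about the Weil pairing, $\zeta_3\in K$ and "deriving a contradiction" is unnecessary and should be cut, since your final direct argument already proves the claim.
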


\begin{proof}
If \(x_0\in F\) is a root of \(\psi_3(x)\), then a corresponding point of order
\(3\) has the form
\( P=(x_0,\sqrt{f(x_0)}) \).
Since \(f(x_0)\in F\), this point is defined over \(F\) or over a quadratic extension of \(F\).

Conversely, suppose that \(P\in E(L)\) has order \(3\), where \([L:F]\leqslant 2\).
If \(L=F\), then \(x(P)\in F\), so \(\psi_3(x)\) has a root in \(F\). Otherwise \(L/F\) is quadratic. Let \(\sigma\) be the nontrivial automorphism in \(\operatorname{Gal}(L/F)\).

If \(\sigma(P)\in\langle P\rangle\), then
\(\sigma(P)=\pm P\), and hence
\(
\sigma(x(P))=x(\sigma(P))=x(\pm P)=x(P)\). Thus \(x(P)\in F\).

If \(\sigma(P)\notin\langle P\rangle\), then
\(
R=P+\sigma(P)
\)
is a nonzero point of order \(3\), and
\(\sigma(R)=\sigma(P)+P=R\).
Thus \(R\in E(F)\), so \(x(R)\in F\). In either case, \(\psi_3(x)\) has a root in \(F\).
\end{proof}

We could now try to see when our polynomial \(\psi_3\) in \eqref{eq:psi3-family} has a rational root, but we first simplify it by writing our curve in Legendre form.

\begin{lemma}\label{lem:legendre-quad-3torsion}
Let \(F\) be a field of characteristic different from \(2\) and \(3\), let
\(\lambda\in F\setminus\{0,1\}\), and let \(d\in F^\times\). Consider the
quadratic twist of the Legendre curve
\(E_{d,\lambda}: Y^2=d\,u(u-1)(u-\lambda)\). Then \(E_{d,\lambda}\) has a
point of order \(3\) over an extension of \(F\) of degree at most \(2\) if and
only if there exists \(s\in F\), \(s^2\neq 1\), such that
\[
\lambda = -\frac{2s+1}{(s-1)(s+1)^3}
\quad\text{or}\quad
\lambda = \frac{2s-1}{(s-1)^3(s+1)}.
\]
\end{lemma}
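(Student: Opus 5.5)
By \cref{lem:quadratic-3torsion-psi3}, $E_{d,\lambda}$ has a point of order $3$ over an extension of degree at most $2$ if and only if its $3$-division polynomial has a root in $F$. So the plan is to compute this polynomial explicitly and then find exactly when it has an $F$-rational root.

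\textbf{Step 1: remove the twist.} The substitution $X=du$, $Y'=dY$ takes $E_{d,\lambda}$ to the model $Y'^2=X(X-d)(X-d\lambda)$. Over $\bar F$, the twist is isomorphic to $y^2=u(u-1)(u-\lambda)$ via $u\mapsto du$ on $x$-coordinates. Since $d\in F^\times$, this scaling preserves $F$-rationality of the roots of $\psi_3$. Hence it suffices to treat $d=1$.

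\textbf{Step 2: write $\psi_3$ as a quadratic in $\lambda$.} For $y^2=u(u-1)(u-\lambda)$ we have $A=-(1+\lambda)$, $B=\lambda$, $C=0$, so the formula recalled above gives
\[
\psi_3(u)=3u^4-4(1+\lambda)u^3+6\lambda u^2-\lambda^2 .
\]
I will regard this as a quadratic in $\lambda$:
\[
\lambda^2-(6u^2-4u^3)\lambda-(3u^4-4u^3)=0 .
\]
Its reduced discriminant simplifies to $(3u^2-2u^3)^2+3u^4-4u^3=4u^3(u-1)^3$. Therefore, for $u_0\in F$, we get $\psi_3(u_0)=0$ exactly when
\[
\lambda=3u_0^2-2u_0^3\pm 2\,u_0(u_0-1)\,w, \qquad w^2=u_0(u_0-1).
\]
Since $\lambda\in F$, the element $w$ must lie in $F$, unless $u_0(u_0-1)=0$. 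That degenerate case gives $\lambda=0$ or $\lambda=1$ (check: $u_0=0$ forces $\lambda^2=0$, and $u_0=1$ forces $(\lambda-1)^2=0$), both excluded. So an $F$-rational root exists if and only if there are $u_0,w\in F$ with $u_0(u_0-1)=w^2$ and $\lambda$ equal to one of the two expressions above.

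\textbf{Step 3: parametrize the conic.} The conic $w^2=u(u-1)$ has the rational point $(0,0)$. Lines through it can be written $w=tu$ or, after a Möbius change of the slope, in terms of $s$, which gives $u=1/(1-t^2)$ (or the analogous form in $s$). I will pick the parametrization so that $u$ and $w$ are rational functions of $s$ with denominator $(s-1)(s+1)$, for instance $u=-1/((s-1)(s+1))$ up to an affine change of $s$. Substituting into $3u^2-2u^3\pm2u(u-1)w$ and factoring should give $-\frac{2s+1}{(s-1)(s+1)^3}$ for one sign and $\frac{2s-1}{(s-1)^3(s+1)}$ for the other. The two forms are related by $s\mapsto -s$ together with the sign flip $w\mapsto -w$. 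The excluded values $s=\pm1$ are exactly the points at infinity of the parametrization, where $u$ is undefined.

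\textbf{Step 4: the converse and the main obstacle.} For the converse, given $s$ with $s^2\neq1$, I define $u_0(s)$ by the same parametrization and verify that it is a root of $\psi_3$ for the corresponding $\lambda$; this also follows by reversing Steps 2–3. I expect the main obstacle to be Step 3. One has to choose the parametrization (the normalization of $s$) so that the final rational functions come out exactly in the stated factored form, and then carry out the factorization cleanly. I would first try $u=1/(1-t^2)$ and then search for an affine substitution $t=\phi(s)$ that matches the denominators $(s-1)(s+1)^3$ and $(s-1)^3(s+1)$, verifying the result symbolically (e.g.\ in Magma).
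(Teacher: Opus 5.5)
Your proposal is correct and follows essentially the same route as the paper: Lemma~\ref{lem:quadratic-3torsion-psi3}, then $\psi_3$ as a quadratic in $\lambda$ with discriminant a multiple of $u^3(u-1)^3$, then the conic $w^2=u(u-1)$ parametrized by lines $w=su$ through $(0,0)$. Your explicit reduction of the twist and your handling of $u_0\in\{0,1\}$ are slightly more careful than the paper. The obstacle you anticipate in Step~3 does not arise: with no change of variable, $u=1/(1-s^2)$ and $w=s/(1-s^2)$ give $\lambda=(1-3s^2\pm 2s^3)/(1-s^2)^3$, and the factorizations $2s^3-3s^2+1=(s-1)^2(2s+1)$ and $2s^3+3s^2-1=(s+1)^2(2s-1)$ yield exactly the two stated forms.
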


\begin{proof}
The \(3\)-division polynomial of the Legendre curve
\(E_\lambda: y^2=u(u-1)(u-\lambda)\) is
\(\psi_3(u)=3u^4-4(1+\lambda)u^3+6\lambda u^2-\lambda^2\). The same polynomial
gives the \(u\)-coordinates of all the \(3\)-torsion points on every
quadratic twist \(E_{d,\lambda}\) (since quadratic twisting does not alter the first coordinate).

By Lemma \ref{lem:quadratic-3torsion-psi3}, \(E_{d,\lambda}\) has a point of
order \(3\) over an extension of degree at most \(2\) if and only if
there exists \(u\in F\) such that \(\psi_3(u)=0\). Viewing this as a
quadratic equation in \(\lambda\), we get
\(\lambda^2+(4u^3-6u^2)\lambda-(3u^4-4u^3)=0\). Its discriminant is
\(16u^3(u-1)^3\). Since \(\lambda\in F\), this discriminant must be a square in
\(F\). Therefore \(u(u-1)\) is a square in \(F\). Write \(u(u-1)=v^2\).

The conic \(v^2=u(u-1)\) is parametrized by the line \(v=su\) through
\((0,0)\), giving \(u=1/(1-s^2)\) and \(v=s/(1-s^2)\). Substituting this into
the quadratic formula for \(\lambda\) gives
\[
\lambda = -\frac{2s+1}{(s-1)(s+1)^3}
\quad\text{or}\quad
\lambda = \frac{2s-1}{(s-1)^3(s+1)}.
\]
The converse follows by reversing the calculation: either displayed value of
\(\lambda\), with \(u=1/(1-s^2)\), gives a root of \(\psi_3(u)\) in \(F\), and
then Lemma~\ref{lem:quadratic-3torsion-psi3} gives a point of order \(3\) over
\(F\) or over a quadratic extension of \(F\).
\end{proof}

We now apply Lemma~\ref{lem:legendre-quad-3torsion} to the curve
\(E_{a,b,c}\). The three ramification points of the \(x\)-coordinate map are
\(-ab\), \(-ac\), and \(-bc\). Put
\(u=(x+ab)/(ab-ac)\). Then these three points are sent to \(0\), \(1\), and
\begin{equation}
\label{eq:lambda-family}
\lambda
=
\frac{(-bc)-(-ab)}{(-ac)-(-ab)}
=
\frac{bc-ab}{ac-ab}
=
\frac{b(c-a)}{a(c-b)}
=
\frac{b(b+2r)}{a(a+2r)}.
\end{equation}
In the coordinate \(u\), the curve \(E_{a,b,c}\) becomes a quadratic twist of
the Legendre curve \(y^2=u(u-1)(u-\lambda)\).

\begin{proposition}\label{prop:lambda-parametrization}
Assume that an elliptic curve \(E_{a,b,c}\), induced by a regular Diophantine triple \(\{ a, b, c\}\),  acquires a point of order \(3\) over a quadratic
field. Then there exists \(s\in \Q\), \(s^2\neq 1\), such that
\[
\frac{b(b+2r)}{a(a+2r)}
=
-\frac{2s+1}{(s-1)(s+1)^3}
\quad\text{or}\quad
\frac{b(b+2r)}{a(a+2r)}
=
\frac{2s-1}{(s-1)^3(s+1)}.
\]
\end{proposition}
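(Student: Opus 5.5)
The plan is to reduce the statement to \cref{lem:legendre-quad-3torsion} with \(F=\Q\), using the change of coordinates described just before the proposition. Throughout, work with the model \(E_{a,b,c}: y^2=(x+ab)(x+ac)(x+bc)\), which is isomorphic over \(\Q\) to the induced curve. Having a point of order \(3\) is invariant under \(\Q\)-isomorphism, and such an isomorphism carries points over a quadratic field \(K\) to points over the same \(K\). So the hypothesis transfers to this model.

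\textbf{Step 1: Legendre form.} Substitute \(x=ab+(ab-ac)u\), so that \(u=(x+ab)/(ab-ac)\). Note \(ab-ac=a(b-c)\neq 0\), since \(a\neq0\) and \(b\neq c\) for a triple. Then
\[
x+ab=a(b-c)u,\qquad x+ac=a(b-c)(u-1),\qquad x+bc=a(b-c)(u-\lambda),
\]
with \(\lambda=b(c-a)/(a(c-b))\) as in \eqref{eq:lambda-family}. Hence
\[
y^2=a^3(b-c)^3\,u(u-1)(u-\lambda).
\]
This is \(E_{d,\lambda}\) with \(d=a^3(b-c)^3\in\Q^\times\). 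The map \((x,y)\mapsto(u,y)\) is an isomorphism defined over \(\Q\), so it preserves the field of definition of torsion points.

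\textbf{Step 2: admissibility of \(\lambda\).} Check that \(\lambda\in\Q\setminus\{0,1\}\).
\begin{itemize}
\item \(\lambda\neq0\): this holds because \(b\neq0\) and \(c\neq a\).
\item \(\lambda\neq1\): this is equivalent to \(b(c-a)\neq a(c-b)\), i.e.\ \(c(b-a)\neq0\), which holds because \(a\neq b\).
\end{itemize}
Distinctness of the three roots \(-ab,-ac,-bc\) is exactly what makes the cubic separable, so \(E_{d,\lambda}\) is a genuine elliptic curve.

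\textbf{Step 3: apply the lemma.} By hypothesis there is a point of order \(3\) over a quadratic field \(K\). Transport it to \(E_{d,\lambda}\); it remains of order \(3\) over \(K\), an extension of \(\Q\) of degree \(2\). \cref{lem:legendre-quad-3torsion} then gives \(s\in\Q\) with \(s^2\neq1\) such that \(\lambda\) equals one of the two displayed rational functions of \(s\).

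\textbf{Step 4: rewrite \(\lambda\).} Use regularity to express \(\lambda\) in the stated form. From \(c=a+b+2r\) we get \(c-a=b+2r\) and \(c-b=a+2r\), so
\[
\lambda=\frac{b(b+2r)}{a(a+2r)}.
\]

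The only point needing care is Step 1: confirming that the substitution is a \(\Q\)-rational isomorphism onto a twist of exactly the form \(Y^2=d\,u(u-1)(u-\lambda)\) used in the lemma, with the leading constant absorbed into \(d\). The rest is direct bookkeeping.
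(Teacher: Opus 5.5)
Your proposal is correct and takes the same approach as the paper: pass to the coordinate \(u=(x+ab)/(ab-ac)\), recognize \(E_{a,b,c}\) as the quadratic twist \(y^2=a^3(b-c)^3\,u(u-1)(u-\lambda)\), and apply \cref{lem:legendre-quad-3torsion} with \(F=\Q\); your checks that \(\lambda\notin\{0,1\}\) and \(d\neq 0\) spell out what the paper leaves implicit. One cosmetic slip: the substitution should read \(x=-ab+(ab-ac)u\), not \(x=ab+(ab-ac)u\), though the factorizations you display are the correct ones for the intended substitution.
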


\begin{proof}
By the computation above, \(E_{a,b,c}\) is a quadratic twist of the Legendre
curve with parameter \(\lambda=b(b+2r)/(a(a+2r))\). The claim follows directly
from Lemma~\ref{lem:legendre-quad-3torsion}.
\end{proof}

Proposition~\ref{prop:lambda-parametrization} reduces the existence of
quadratic \(3\)-torsion in this family to the rational Diophantine problem
\[
\frac{b(b+2r)}{a(a+2r)}
=
-\frac{2s+1}{(s-1)(s+1)^3}
\quad\text{or}\quad
\frac{b(b+2r)}{a(a+2r)}
=
\frac{2s-1}{(s-1)^3(s+1)},
\]
together with the relation \(r^2=ab+1\). In particular, a proof that this
system has no rational solutions will rule out the torsion groups
\(\Z/2\Z\times \Z/6\Z\) and \(\Z/2\Z\times \Z/12\Z\) for the family
\(E_{a,b,c}\).

\begin{theorem}
Let \( E_k=E_{k-1,k+1,4k}\) be an elliptic curve induced by a triple $\{k-1, k+1, 4k\}$ where $k\in \Z\setminus\{0,\pm1\} $. Then \(E_k\) does not acquire a point of order \(3\) over any quadratic field. Consequently, for every quadratic field \(K/\Q\),
\[
E_k(K)_{\tors}
\not\cong \Z/2\Z\times\Z/6\Z,
\quad
E_k(K)_{\tors}
\not\cong \Z/2\Z\times\Z/12\Z.
\]
\end{theorem}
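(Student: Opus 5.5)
By \cref{prop:lambda-parametrization}, it suffices to show that neither of the two equations has a rational solution \(s\) with \(s^2\neq 1\) when \(\lambda\) is the value coming from \(\{k-1,k+1,4k\}\). The theorem is stated for integers \(k\), but the plan below actually rules out all rational \(k\notin\{0,\pm1\}\). Here \(r^2=(k-1)(k+1)+1=k^2\), so \(r=|k|\). The substitution \(k\mapsto -k\) sends \(\lambda\) to \(1/\lambda\), and this only permutes the ramification points. So I may take \(r=k\), which gives
\[
\lambda=\frac{(k+1)(3k+1)}{(k-1)(3k-1)}.
\]
Next I reduce the two cases to one. Replacing \(s\) by \(-s\) in \(-\frac{2s+1}{(s-1)(s+1)^3}\) gives exactly \(\frac{2s-1}{(s-1)^3(s+1)}\). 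Hence it is enough to treat the single equation \(\lambda=-Q/P\), where
\[
P=(s-1)(s+1)^3,\qquad Q=2s+1.
\]

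Clearing denominators gives an equation that is quadratic in \(k\):
\[
3(P+Q)k^2+4(P-Q)k+(P+Q)=0 .
\]
For a rational \(k\) to exist, the discriminant must be a rational square. That is, I need rational points \((s,w)\) on the curve
\[
C:\ w^2=4(P-Q)^2-3(P+Q)^2,
\]
where the right-hand side is a polynomial of degree \(8\) in \(s\). The first concrete step is to expand and factor this octic. Any repeated factors can be absorbed into \(w\), which may lower the genus. I would also look for extra automorphisms of \(C\), coming from the symmetries \(k\mapsto -k\) and \(\lambda\mapsto 1/\lambda\). Such automorphisms would give quotient maps from \(C\) to curves of genus \(1\) or \(2\).

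Then I determine \(C(\Q)\). If there is an elliptic quotient of rank \(0\) over \(\Q\), its finitely many rational points pull back to all of \(C(\Q)\), and the problem is finished by a finite check. Otherwise I compute the Jacobian of \(C\) (or of a genus \(2\) quotient) in Magma. If its rank is smaller than the genus, I apply Chabauty--Coleman, combined with the Mordell--Weil sieve to eliminate spurious residue classes. The last step is to take each rational point found, together with the points at infinity, solve the quadratic for \(k\), and check that the result is \(s=\pm1\), or \(k\in\{0,\pm1\}\), or that \(P+Q=0\). All of these are degenerate.

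I expect the main obstacle to be provably determining \(C(\Q)\). If the octic does not factor nicely, \(C\) has genus \(3\), and the Jacobian rank may be too large for Chabauty. In that case I would first try to exploit the involutions on \(C\) to find a rank-\(0\) elliptic quotient, or a genus \(2\) quotient whose Jacobian has rank at most \(1\). Failing that, I would use a two-cover descent or an elliptic Chabauty argument over a suitable number field.
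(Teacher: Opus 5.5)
Your reduction is the paper's: the same quadratic $3(P+Q)k^2+4(P-Q)k+(P+Q)=0$ (with $P+Q=s^3(s+2)$ and $P-Q=s^4+2s^3-4s-2$) and the same genus-$3$ discriminant curve, and the paper completes your main plan, using $\operatorname{rank} J(\Q)=2<3$, with Chabauty--Coleman at $19$ plus a Mordell--Weil sieve, which leaves only $s\in\{-2,-1,-\tfrac12,0,1\}$ and the two points at infinity. One small correction to your final check: at $s=-\tfrac12$ (where $Q=0$, so $\lambda=0$) and at the points at infinity the quadratic gives $k\in\{-1,-\tfrac13\}$, so you must also count $\lambda=0$ as a degenerate outcome, and your claim for all rational $k\notin\{0,\pm1\}$ must additionally exclude $k=\pm\tfrac13$, where the triple collapses.
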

\begin{remark}
    We work with integral triples, but it is worth noting that our entire proof is valid for $k\in\Q$, provided one eliminates further degenerate $k\in\{ -\frac 13, \frac 13\}$, which do not give a triple to start with.
\end{remark}

\begin{proof}
For this family we have
\[
a=k-1,\quad b=k+1,\quad r=k,\quad c=4k,
\]
hence
\[
\lambda=\frac{b(b+2r)}{a(a+2r)}
=\frac{(k+1)(3k+1)}{(k-1)(3k-1)}.
\]
If \(E_k\) acquires a point of order \(3\) over a quadratic field, then by
Proposition~\ref{prop:lambda-parametrization} there exists \(p\in\Q\) such that
\[
\frac{(k+1)(3k+1)}{(k-1)(3k-1)}
=
-\frac{2p+1}{(p-1)(p+1)^3}
\]
or
\[
\frac{(k+1)(3k+1)}{(k-1)(3k-1)}
=
\frac{2p-1}{(p-1)^3(p+1)}.
\]
The second equation is obtained from the first by replacing \(p\) with \(-p\),
so it suffices to consider the first one.

After clearing denominators, the first equation becomes
\[
3p^3(p+2)k^2+4(p^4+2p^3-4p-2)k+p^3(p+2)=0.
\]
Thus its discriminant must be a square in \(\Q\). A calculation gives
\[
\Delta
=
4(p^2-2p-2)(p^6+6p^5+18p^4+16p^3-12p^2-24p-8).
\]
Hence \(p\) determines a rational point on the genus-\(3\) curve
\[
C':\ y^2=(p^2-2p-2)(p^6+6p^5+18p^4+16p^3-12p^2-24p-8),
\]

which, after $p=x-1$, gives 
\[
C:\ y^2= 1 - 4 x + 4 x^2 - 28 x^3 + 70 x^4 - 28 x^5 + 4 x^6 - 4 x^7 + x^8,
\]
where the hyperelliptic polynomial is self-reciprocal (i.~e.~palindromic). 

A Magma computation in the next section shows that the only (affine) rational \(p\)-coordinates on \(C'\) are
\[
p\in\left\{-2,-1,-\frac12,0,1\right\}.
\]
For \(p=\pm1\), the expression
\(
-(2p+1)/((p-1)(p+1)^3)
\)
is undefined. For \(p=0\) and \(p=-2\), it equals \(1\), while for
\(p=-\frac12\), it equals \(0\). Thus the only resulting values of \(\lambda\)
are \(0\) and \(1\), which are the singular Legendre values.

Therefore no nonsingular curve \(E_k\) with \(k\in\Z\setminus\{0,\pm1\}\) can
acquire a point of order \(3\) over a quadratic field.
\end{proof}

\section{Rational points on the hyperelliptic genus \texorpdfstring{$3$}{3} curve}

We have reduced our problem to a computation of all rational points on
\[
C:\ y^2= 1 - 4 x + 4 x^2 - 28 x^3 + 70 x^4 - 28 x^5 + 4 x^6 - 4 x^7 + x^8.
\]
The actual determination of $C(\Q)$ is the most algorithmic part of the paper. 

We give an exposition of the Chabauty--Coleman method with Mordell--Weil sieve, in hopes of encouraging researchers in the field of Diophantine \(m\)-tuples to apply these methods.

\begin{theorem}\label{thm:points}
Let \(C\) be the smooth projective model of the affine curve
\[
C:\ y^2= 1 - 4 x + 4 x^2 - 28 x^3 + 70 x^4 - 28 x^5 + 4 x^6 - 4 x^7 + x^8.
\]
Then \(C\) has exactly the following rational points
\[
C(\Q)=
\left\{
(-1,\pm 12),\,
(0,\pm 1),\,
\left(\frac12,\pm \frac{3}{16}\right),\,
(1,\pm 4),\,
(2,\pm 3),\,
\infty^+,\infty^-
\right\}.
\]
\end{theorem}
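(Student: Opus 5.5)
The plan is to determine \(C(\Q)\) by Chabauty--Coleman on the Jacobian \(J\) of \(C\), with the Mordell--Weil sieve to remove the residue classes that Chabauty alone cannot settle.

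\textbf{Symmetries and quotients.} Since the octic is palindromic, \(C\) has the extra involution \(\iota:(x,y)\mapsto(1/x,\,y/x^4)\), together with the hyperelliptic involution. I will first look at the quotient \(C/\langle\iota\rangle\). In the variable \(t=x+1/x\), the equation becomes \((y/x^2)^2=g(t)\), where \(g\) is the quartic \(t^4-4t^3+4\cdot(\dots)\) obtained from \(x^{-4}f(x)\). This is a genus-\(1\) curve \(E_1\). Composing \(\iota\) with the hyperelliptic involution gives a second quotient \(E_2\), defined by \(w^2=x\,g(t)\)-type data, which is of genus \(1\) or \(2\) depending on the model. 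Consequently \(J\) is isogenous to \(E_1\times A_2\), with \(\dim A_2=2\).

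\textbf{Rational points via a quotient.} If some quotient has Mordell--Weil rank smaller than its genus, I can bound points on that quotient directly. If \(E_1\) has rank \(0\), then \(C(\Q)\) maps into the finite set \(E_1(\Q)\), and I simply pull back those points. This already explains the pairing of the listed points: \(x=1/2\leftrightarrow 2\) and \(x=0\leftrightarrow\infty\), while \(x=\pm1\) are fixed by \(\iota\). If instead \(E_1\) has positive rank, I will treat the genus-\(2\) quotient: compute the rank of its Jacobian with a \(2\)-descent in Magma (\texttt{RankBounds}), and if the rank is at most \(1\), apply Magma's \texttt{Chabauty} for genus \(2\) (Coleman integrals together with the Mordell--Weil sieve) to obtain its rational points, then pull them back to \(C\).

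\textbf{Fallback on \(J\) itself.} If neither quotient suffices, I will run the procedure on \(J\) directly. The steps are:
\begin{enumerate}
\item Bound \(\operatorname{rank}J(\Q)\) by a \(2\)-descent (\texttt{RankBound}), aiming for rank \(\le 2<3=g\).
\item Produce generators from differences of the known points, for example \([(0,1)-\infty^+]\) and \([(1,4)-\infty^+]\), and check their independence by reducing modulo several primes.
\item Choose a prime of good reduction, such as \(p=5\) or \(7\). Compute the annihilating regular differentials, which form a \((g-r)\)-dimensional space, and count the zeros of the Coleman integrals in each residue disc, using Stoll's bound to show there is at most one point per disc containing a known point.
\item Use the Mordell--Weil sieve over several auxiliary primes to eliminate the residue discs that contain no known rational point.
\end{enumerate}

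\textbf{Main obstacles.} The step I expect to be hardest is certifying the rank. In particular, if \(2\)-descent gives only an upper bound that exceeds the number of independent points found, I would try to close the gap using the isogeny decomposition described above, or by searching for more points on \(J\), before falling back on the sieve. A second obstacle is that the listed points have to saturate enough of the group for the sieve to work, so I would check saturation at the small primes.
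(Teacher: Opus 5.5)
Your proposal is correct. Its main branch takes a genuinely different route from the paper, while your ``fallback'' is essentially the paper's proof. The paper bounds $\operatorname{rank}J(\Q)=2$ by $2$-descent plus a nonzero regulator, and runs genus-$3$ Chabauty--Coleman directly on $C$ at $p=19$ with a single annihilating differential. This leaves twelve extra residue disks, which it removes with a hand-built Mordell--Weil sieve modulo $273$, together with saturation checks at $3,7,13$. The paper notices the palindromic symmetry but uses it only to describe the points at infinity.

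Your quotient route does go through, but it helps to make it concrete.

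\textbf{The elliptic quotient.} One gets $x^{-4}f(x)=g(t)=t^4-4t^3-16t+64=(t-4)(t^3-16)$, so $E_1:\ w^2=(t-4)(t^3-16)$. Via $X=4+12/(t-4)$, $Y=6w/(t-4)^2$ this becomes $Y^2=X^3-12X+20$. The image $(2,2)$ of $(-1,12)$ has $4\cdot(2,2)=(89,839)$, which is non-torsion by Nagell--Lutz. So $E_1$ has positive rank, and your first branch never applies.

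\textbf{The genus-$2$ quotient.} The second quotient is not described by ``$w^2=x\,g(t)$'', and its genus is not model-dependent. The function $v=y(x^2-1)/x^3$ is invariant under $(x,y)\mapsto(1/x,-y/x^4)$ and gives
\[
C_2:\ v^2=(t^2-4)(t-4)(t^3-16),
\]
of genus exactly $2$, with $J\sim E_1\times J_{C_2}$. Hence $\operatorname{rank}J_{C_2}(\Q)=2-\operatorname{rank}E_1(\Q)\leqslant 1$, and genus-$2$ Chabauty (with Magma's built-in sieve) applies, so your plan closes.

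\textbf{Pulling back.} The visible points of $C_2$ are $(\pm2,0)$, $(4,0)$, $(5/2,\pm 9/8)$ and $\infty^\pm$. They pull back to exactly the twelve listed points, except the Weierstrass point $(4,0)$, which does not lift because $4^2-4=12$ is not a square.

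\textbf{Comparison.} Your route replaces genus-$3$ Coleman integration, the custom sieve modulo $273$ and the separate saturation arguments with standard genus-$2$ machinery. The price is that you need the extra automorphism and a generator of $J_{C_2}(\Q)$. The paper's route is uniform and would work without any symmetry, which fits its expository aim.
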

Since the defining polynomial has even degree and leading coefficient \(1\), the smooth projective model has two rational points at infinity, denoted \(\infty^+\) and \(\infty^-\). Equivalently, after putting \(t=1/x\) and \(Y=y/x^4\), the equation near infinity becomes
\( Y^2 = 1 - 4t + 4t^2 - 28t^3 + \cdots + t^8 \) (since our hyperelliptic polynomial is self-reciprocal, i.~e.~palindromic, we get the same polynomial in \(t=1/x\)),
so at \(t=0\) one has \(Y=\pm 1\). These two points are denoted by \(\infty^+\) and \(\infty^-\).

All computations below were carried out in {\sc Magma} \cite{BosmaCannonPlayoust1997} in the file \href{https://github.com/NikolaAdzaga/TorsionRegularTriples/blob/main/palindromic_curve.m}{palindromic\textunderscore curve.m};
the total running time is slightly below one minute.
Since $\operatorname{disc}(f)=2^{40}\cdot 3^{12}$, the curve has good reduction at
every prime $q\geqslant 5$, so all primes used below are primes of good reduction.

\subsection{The Mordell--Weil group}

Let $J$ be the Jacobian of $C$. A $2$-descent
(\cite{Stoll2001}, as implemented in {\sc Magma}) gives
$\operatorname{rank} J(\Q)\leqslant 2$. With base point $P_0=\infty^-$, we embed the curve into its Jacobian \(C(\Q) \hookrightarrow J(\Q), P \mapsto [P-P_0]\). The classes
\[
  D_1 = [(-1,-12)-\infty^-], \quad D_2 = [(0,-1)-\infty^-]
\]
have regulator $\det\left(\langle D_i,D_j\rangle\right)\approx 0.13175\neq 0$
with respect to the canonical height pairing, so they are independent and
$\operatorname{rank} J(\Q)=2$. Throughout, we work with the finite-index subgroup
$\Gamma=\langle D_1,D_2\rangle\cong\Z^2$ of $J(\Q)$.

Torsion will play no role: $J(\Q)_{\tors}$ injects into $J(\F_q)$ for
every odd prime $q$ of good reduction, and
\[
  \gcd\left(\#J(\F_{11}),\,\#J(\F_{13}),\,\#J(\F_{29})\right)
  =\gcd\left(2^8\cdot 7,\; 2^8\cdot 13,\; 2^4\cdot 3\cdot 5\cdot 97\right)=16,
\]
so $\#J(\Q)_{\tors}$ divides $16$. All the moduli we use below are odd,
so torsion does not affect the computations.

\subsection{Chabauty--Coleman method}

Since $\operatorname{rank} J(\Q)=2 < 3 = g$, the method of Chabauty and Coleman
applies \cite{Chabauty1941,Coleman1985}; see \cite{SiksekOhrid2014, McCallumPoonen2012} for an introduction. There is a nonzero regular differential $\omega$ on $C$ over
$\Q_{19}$ whose Coleman integral $\eta(P)=\int_{P_0}^{P}\omega$ vanishes on $J(\Q)$; in particular $\eta$ vanishes at every
$P\in C(\Q)$. We computed $\omega$ and the zeros of $\eta$ with the
Coleman integration package of Balakrishnan and Tuitman
\cite{BalakrishnanTuitman2020,BalakrishnanTuitmanCode} at $p=19$.

The curve has $\#C(\F_{19})=24$, so $C(\Q_{19})$ splits into $24$ residue
disks, and $\eta$ turns out to have exactly one zero in each of them.
Twelve of these zeros are the twelve known rational points of
Theorem~\ref{thm:points}; since each of their disks contains no further zero of
$\eta$, those disks contain no further rational points. The remaining twelve
zeros are points lying in the disks above
\[
  (x,y)\equiv (3,\pm 2),\ (7,\pm 5),\ (8,\pm 1),\ (11,\pm 2),\ (12,\pm 7),\
  (13,\pm 8) \pmod{19},
\]
and we have to show that none of these twelve ("extra") disks contains a
rational point. This is what the Mordell--Weil sieve does.

\subsection{The Mordell--Weil sieve}

The idea of the sieve \cite{Scharaschkin1999,Flynn2004,BruinStoll2010} is that
a hypothetical rational point would give compatible images modulo every
prime, and one shows that no global point can satisfy all the resulting local conditions. Although we give quite a few details and explanations, we also recommend Siksek's lecture notes \cite{SiksekOhrid2014} as they contain a genus-\(2\) example for both the Coleman integration and the Mordell--Weil sieve.

Suppose $R\in C(\Q)$ lies in one of the twelve extra disks, and consider
$x=[R-P_0]\in J(\Q)$. As \(J(\Q)\) is infinite, to perform some computations, we pass to a finite quotient (by looking at its reduction modulo \(m\)). Fix the modulus
\(  m = 273 = 3\cdot 7\cdot 13 \),
precisely the odd part of $\#J(\F_{19})=2^5\cdot 3\cdot 7\cdot 13$.
Let \( J(\Q)_{\mathrm{free}} = J(\Q)/J(\Q)_{\tors} \cong \Z^2 \) denote the free part of the Mordell--Weil group. The inclusion \(\Gamma \hookrightarrow J(\Q)\) gives a map \(\Gamma/m\Gamma \to J(\Q)_{\mathrm{free}}/mJ(\Q)_{\mathrm{free}}\). We will be computing in \( \Gamma/m\Gamma\), but because the index of $\Gamma$ in \(J(\Q)\) is coprime to \(m\) (for proof, see the next subsection), we have 
\[
  J(\Q)_{\mathrm{free}}/mJ(\Q)_{\mathrm{free}} \cong \Gamma/m\Gamma \cong (\Z/m\Z)^2
\]
so every rational point \(R\), i.~e.~every $x$, determines a pair $(a,b)\in(\Z/273\Z)^2$ with
$x\equiv aD_1+bD_2$. The choice of an odd modulus also avoids the rational torsion, whose order divides \(16\). The sieve collects necessary conditions on $(a,b)$:

\begin{enumerate}
\item[(C1)] \emph{At $p=19$.} Since \( \#J(\F_{19}) = 32\cdot 273\), multiplication by $32$ maps
$J(\F_{19})$ onto a cyclic group of order $273$, and reducing
$x\equiv aD_1+bD_2$ modulo $19$ gives
\[
  32\left(a\bar D_1+b\bar D_2\right) \;=\; 32\,[\bar R-\bar P_0]
  \quad\text{in } J(\F_{19}),
\]
where $\bar R\in C(\F_{19})$ is the known reduction determined by the disk.
This single condition cuts the $273^2=74\,529$ pairs $(a,b)$ down to $273$
per disk.
\item[(C2)] \emph{At an auxiliary prime $q$.} Whatever $R$ is, its reduction lies on
$C(\F_q)$. Setting $d_q=\gcd(m,\#J(\F_q))$ and $n_q=\#J(\F_q)/d_q$, this forces
\[
  n_q\left(a\bar D_1+b\bar D_2\right)\;\in\;
  n_q\cdot\left\{\,[\bar P-\bar P_0] : \bar P\in C(\F_q)\,\right\}
  \quad\text{in } J(\F_q).
\]
The right-hand side has at most $\#C(\F_q)\approx q$ elements inside a group of
exponent $d_q$, so this is a strong condition whenever $d_q$ is large. This is also frequently a guide in choosing which primes to use as auxiliary primes for the sieve.
\end{enumerate}

Both types of conditions are necessary, so if for some disk
no pair $(a,b)$ satisfies all of them simultaneously, that disk contains
no rational point.

The auxiliary primes were found by a search over small primes of good
reduction, keeping only those with $d_q>1$ that actually eliminated surviving
pairs; six primes sufficed. Table~\ref{tab:sieve} shows the run. The prime
$q=113$ is the decisive one: it is the only auxiliary prime in our list such that $273\mid\#J(\F_{q})$.

\begin{table}[ht]
\centering
\begin{tabular}{rllr}
\toprule
$q$ & $\#J(\F_q)$ & $d_q=\gcd(273,\#J(\F_q))$ & surviving pairs \\
\midrule
--  &             &      & $12\cdot 273=3276$ \\
11  & $2^8\cdot 7$              & $7$   & $2340$ \\
13  & $2^8\cdot 13$             & $13$  & $1980$ \\
23  & $2^6\cdot 3^3\cdot 7$     & $21$  & $880$  \\
37  & $2^7\cdot 13\cdot 47$     & $13$  & $660$  \\
43  & $2^8\cdot 3^3\cdot 13$    & $39$  & $520$  \\
113 & $2^4\cdot 3^2\cdot 7\cdot 13\cdot 113$ & $273$ & $0$ \\
\bottomrule
\end{tabular}
\caption{The Mordell--Weil sieve: total number of surviving pairs
$(a,b)\in(\Z/273\Z)^2$, summed over the twelve extra residue disks, after
each auxiliary prime.}
\label{tab:sieve}
\end{table}

We conclude that none of the twelve extra disks contains a
rational point, which proves Theorem~\ref{thm:points}.

\subsection{Saturation}

The sieve used that every $x\in J(\Q)$ is, modulo $mJ(\Q)$ and torsion, an
integral combination of $D_1$ and $D_2$. This requires the index
$n=[J(\Q)_{\mathrm{free}}:\Gamma]$ to be coprime to $m=273$. This means that we cannot divide by any prime \(\ell\mid m\) in our \(\Gamma\). More precisely, for a prime $\ell$, the subgroup $\Gamma$ is called
\emph{$\ell$-saturated} in $J(\Q)$ if there is no $y\in J(\Q)$ with
$\ell y\in\Gamma+J(\Q)_{\tors}$ but
$y\notin\Gamma+J(\Q)_{\tors}$; equivalently, $\ell\nmid n$. So we must
check that $\Gamma$ is $\ell$-saturated for $\ell\in\{3,7,13\}$.

This is done by reduction. Suppose some $x=aD_1+bD_2$ with
$(a,b)\not\equiv(0,0)\pmod\ell$ were $\ell$-divisible in $J(\Q)$ modulo
torsion, say $x=\ell y+t$ with $y\in J(\Q)$ and
$t\in J(\Q)_{\tors}$. Reducing modulo an odd prime $q$ of good
reduction and multiplying by $\#J(\F_q)/\ell$ gives
\[
  \frac{\#J(\F_q)}{\ell}\,\bar x
  \;=\;\#J(\F_q)\,\bar y+\frac{\#J(\F_q)}{\ell}\,\bar t\;=\;0
  \quad\text{in } J(\F_q),
\]
where both summands vanish: the second because $t$ has $2$-power order while $\ell$ is odd, so $\#J(\F_q)/\ell$ retains the
full $2$-part of $\#J(\F_q)$. Hence a single prime $q$ with
$(\#J(\F_q)/\ell)\,(a\bar D_1+b\bar D_2)\neq 0$ is sufficient to conclude that
$aD_1+bD_2$ is \emph{not} $\ell$-divisible. If such a prime is found for
every nonzero class $(a,b)\in(\Z/\ell\Z)^2$, then $\Gamma$ is $\ell$-saturated: since $\ell\mid n$ would force some nonzero class of $\Gamma/\ell\Gamma$ to be
$\ell$-divisible in $J(\Q)_{\mathrm{free}}$.
We restrict to primes $q$ with $\ell\,\|\,\#J(\F_q)$: for such $q$,
multiplication by $\#J(\F_q)/\ell$ is the projection onto
the $\ell$-part of $J(\F_q)$ (i.~e.~onto \( \Z/\ell\Z\)), so the test detects precisely whether
$\bar x\in\ell J(\F_q)$.
In our situation, \(\ell\)-saturation was quickly confirmed:
$\ell=3$ by $q\in\{19,127\}$, $\ell=7$ by $q\in\{11,17\}$, and
$\ell=13$ by $q\in\{13,19\}$.

\medskip
The functions used for sieving and saturation are stored in a separate file \href{https://github.com/NikolaAdzaga/TorsionRegularTriples/blob/main/MWS_and_saturation.m}{MWS\textunderscore and\textunderscore saturation.m} as we will need them for one more computation.

\section{About a family of induced elliptic curves}
For an integer \(k\geqslant 1\), the set \(\{1,2k^2,2k^2+2k+1\}\) is a
\(D(-k^2)\)-triple. From now on, \(E_k\) denotes the elliptic curve induced
by this triple:
\[ E=E_k \, : \, y^2=(x-k^2)(2k^2x-k^2)((2k^2+2k+1)x-k^2), \]
and again, an extension of the starting triple gives an integral point on $E=E_k$. For this triple, in \cite{AF}, we have previously shown that, if $k\equiv 2, 3 \pmod{6}$, then there is no extension. In general, we conjectured that it can only be extended by $d=8k^2+4k+1$ (and in that case $7k^2+4k+1$ must be a square).

For determining the torsion, we will need the following well-known halving criterion.

\begin{theorem}[{\cite[Ch.~1, Theorem 4.1]{Husemoller2004}}]\label{thm:halvingCrit}
Let \(E\) be an elliptic curve over a field \(F\), given by
\(E:\ y^2=(x-\alpha)(x-\beta)(x-\gamma)\), where
\(\alpha,\beta,\gamma\in F\). For a point \((x',y')\in E(F)\), there exists
\((x,y)\in E(F)\) such that \(2(x,y)=(x',y')\) if and only if
\(x'-\alpha\), \(x'-\beta\), and \(x'-\gamma\) are squares in \(F\).
\end{theorem}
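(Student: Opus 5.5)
This is the classical $2$-descent halving criterion, and I would prove it by writing the tangent line at the sought point explicitly. For the direction ``halvable $\Rightarrow$ squares'', assume $2P=P'$ with $P=(x,y)\in E(F)$ and $P'=(x',y')$.
\begin{itemize}
\item If $y=0$, then $2P=O\neq P'$, so we may take $y\neq 0$.
\item The tangent line at $P$ is $Y=\lambda X+\mu$ with $\lambda,\mu\in F$.
\item This line meets $E$ at $P$ (doubly) and at $-P'=(x',-y')$. Hence
\[(X-\alpha)(X-\beta)(X-\gamma)-(\lambda X+\mu)^2=(X-x)^2(X-x').\]
\item Substitute $X=\alpha$. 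This gives $-(\lambda\alpha+\mu)^2=(\alpha-x)^2(\alpha-x')$.
\item Since $\alpha\neq x$ (as $y\neq0$), we get $x'-\alpha=\bigl((\lambda\alpha+\mu)/(x-\alpha)\bigr)^2$, a square in $F$.
\end{itemize}
The same argument with $\beta$ and $\gamma$ shows that $x'-\beta$ and $x'-\gamma$ are squares as well.

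For the converse, assume $x'-\alpha=r_1^2$, $x'-\beta=r_2^2$ and $x'-\gamma=r_3^2$ with $r_i\in F$.
\begin{itemize}
\item The product $r_1r_2r_3$ has square $y'^2$, so by changing the sign of one $r_i$ we arrange $r_1r_2r_3=-y'$ (the sign convention depends on how the line is written).
\item Set $x=x'+r_1r_2+r_1r_3+r_2r_3$ and let $y$ be the corresponding value, determined by the line through $P$ and $-P'$.
\item For the line equation, I would solve the linear system for $\lambda,\mu$ given by $\lambda\alpha+\mu=\pm r_1(x-\alpha)$ and the analogous equations at $\beta$ and $\gamma$, with consistent signs.
\item Check directly that the cubic $(X-\alpha)(X-\beta)(X-\gamma)-(\lambda X+\mu)^2$ equals $(X-x)^2(X-x')$.
\end{itemize}
This shows the line is tangent at $(x,y)$ and passes through $(x',-y')$, so $2(x,y)=(x',y')$.

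The main obstacle is the sign bookkeeping in the converse. To handle it, I would first translate so that $x'=0$, reducing to the curve $y^2=(x+r_1^2)(x+r_2^2)(x+r_3^2)$ and the point $(0,\pm r_1r_2r_3)$. Then I would verify the explicit halving point $x=r_1r_2+r_1r_3+r_2r_3$ by comparing coefficients. With $e_1,e_2,e_3$ the elementary symmetric functions of the $r_i$, only the symmetric identities
\[e_1^2-2e_2=\textstyle\sum r_i^2,\qquad e_2^2-2e_1e_3=\textstyle\sum r_i^2r_j^2\]
are needed, which makes the check routine.
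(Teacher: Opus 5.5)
Your proof is correct and is the standard tangent-line argument behind the result, which the paper does not prove but cites from Husem\"oller: for necessity you evaluate the tangency identity at the roots, and for sufficiency you exhibit the half-point with $x=x'+r_1r_2+r_1r_3+r_2r_3$. The sign bookkeeping you flag is harmless. In the translated coordinates, comparing coefficients forces $(\lambda,\mu)=\pm(e_1,-e_3)$, and since $e_3=\pm y'$, one of these two lines passes through $(0,-y')$.
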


To apply it, using the following straightforward lemma, we find a \(\Q\)-isomorphic model of $E_k$.

\begin{lemma}\label{lem:Qiso}
Let \(K\) be a field of characteristic different from \(2\), and let
\(a,c\in K^\times\) and \(b,d\in K\). The curve
\(y^2=z(az+b)(cz+d)\) is \(K\)-isomorphic to
\(Y^2=X(X+bc)(X+ad)\), via \(X=acz\) and \(Y=acy\).
\end{lemma}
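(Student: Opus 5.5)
Since \(a,c\in K^\times\) and \(\operatorname{char}K\neq 2\), the proof is a direct substitution followed by a check that the map is invertible. Start from \(y^2=z(az+b)(cz+d)\) and multiply both sides by \((ac)^2\). On the right-hand side, split the factor \((ac)^2=ac\cdot a\cdot c\) across the three linear factors: \(ac\) goes with \(z\), \(c\) with \(az+b\), and \(a\) with \(cz+d\). This gives
\[
(acy)^2=(acz)\,\bigl(c(az+b)\bigr)\,\bigl(a(cz+d)\bigr)=(acz)(acz+bc)(acz+ad).
\]
Setting \(X=acz\) and \(Y=acy\) turns this into exactly \(Y^2=X(X+bc)(X+ad)\).

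Next, observe that the substitution is invertible over \(K\), with inverse \(z=X/(ac)\) and \(y=Y/(ac)\), because \(ac\neq 0\). It is therefore an isomorphism of affine curves defined over \(K\). Writing the map as \((z,y)\mapsto(acz,acy)\), it is linear in the coordinates. It extends to the projective closures and sends the point at infinity to the point at infinity, so it is an isomorphism of the projective curves.

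It remains to check that it is compatible with the group laws. Multiply the first equation through by \(ac\) to put it in the form \((y')^2=\) monic cubic; no further step is needed because a linear change of variables preserves Weierstrass form up to scaling. The map is of the standard form \(x\mapsto u^2x\), \(y\mapsto u^3y\) only after a twist by \(ac\). So for the lemma as stated (isomorphism of curves over \(K\)) it suffices to note the explicit inverse and that the point at infinity is fixed. Taking the point at infinity as origin on both sides, an isomorphism of genus-one curves preserving the origin is a group isomorphism. Smoothness transfers automatically: the discriminants differ by a nonzero factor (a power of \(ac\)), so one curve is nonsingular if and only if the other is.

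The only step requiring any care is this last remark that the isomorphism fixes the origin. Everything else is routine algebra, so I do not expect a real obstacle.
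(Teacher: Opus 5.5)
Your proof is correct and is essentially the paper's: the paper substitutes \(z=X/(ac)\), \(y=Y/(ac)\) and notes \(Y^2=a^2c^2\,z(az+b)(cz+d)=X(X+bc)(X+ad)\), which is exactly your distribution of \((ac)^2\) over the three factors, with invertibility coming from \(ac\neq 0\). Your remarks on the point at infinity are correct but go beyond what the paper states; the paragraph about multiplying by \(ac\) to get a monic cubic and the ``twist by \(ac\)'' is muddled (the monic form needs the factor \((ac)^2\), which you already used), but it is not load-bearing and can simply be deleted.
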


\begin{proof}
This follows by substituting \(z=X/(ac)\) and \(y=Y/(ac)\). Indeed,
\(Y^2=a^2c^2z(az+b)(cz+d)=X(X+bc)(X+ad)\).
\end{proof}

\begin{proposition}\label{prop:no4}
Let \(k\geqslant 1\), and let
\[
E_k:\ y^2=(x-k^2)(2k^2x-k^2)((2k^2+2k+1)x-k^2).
\]
Then \(E_k(\Q)_{\tors}\) contains neither
\(\mathbb Z/2\mathbb Z\times \mathbb Z/4\mathbb Z\) nor
\(\mathbb Z/2\mathbb Z\times \mathbb Z/8\mathbb Z\).
\end{proposition}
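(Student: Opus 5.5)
To exclude both groups it suffices to show that \(E_k(\Q)\) has no point of order \(4\). Indeed, \(\Z/2\Z\times\Z/8\Z\) contains \(\Z/2\Z\times\Z/4\Z\), and \(E_k\) already has full rational \(2\)-torsion. A rational point of order \(4\) exists exactly when one of the three rational \(2\)-torsion points is divisible by \(2\) in \(E_k(\Q)\). So the plan is to apply the halving criterion, Theorem~\ref{thm:halvingCrit}, to each of the three \(2\)-torsion points. For this we first need a \(\Q\)-isomorphic model that is monic in \(x\) and has visible rational roots.

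Write \(m=2k^2+2k+1\). The right-hand side equals \(2k^2m\,(x-k^2)(x-\tfrac12)(x-\tfrac{k^2}{m})\). Substituting \(X=2k^2m\,x\) and \(Y=2k^2m\,y\) (the same kind of rescaling as in Lemma~\ref{lem:Qiso}) gives the \(\Q\)-isomorphic model
\[
Y^2=(X-e_1)(X-e_2)(X-e_3),\qquad e_1=2k^4m,\quad e_2=k^2m,\quad e_3=2k^4 .
\]
The pairwise differences factor nicely, using \(m-1=2k(k+1)\) and \(m-2k^2=2k+1\):
\[
e_1-e_2=k^2m(2k^2-1),\qquad e_1-e_3=4k^5(k+1),\qquad e_2-e_3=k^2(2k+1).
\]
I would double-check these three factorizations first, since the rest of the argument depends only on them.

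Next, go through the three \(2\)-torsion points \(T_i=(e_i,0)\). By Theorem~\ref{thm:halvingCrit}, \(T_i\in 2E_k(\Q)\) if and only if \(e_i-e_j\) is a rational square for both \(j\neq i\).
\begin{itemize}
\item For \(T_2\), the difference \(e_2-e_1=-k^2m(2k^2-1)\) is negative for \(k\geqslant1\), so it is not a square.
\item For \(T_3\), the difference \(e_3-e_1=-4k^5(k+1)\) is negative for \(k\geqslant1\), so it is not a square.
\item For \(T_1\), we would need \(4k^5(k+1)=(2k^2)^2\,k(k+1)\) to be a square, i.e.\ \(k(k+1)\) a square. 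This is impossible for \(k\geqslant1\), since \(k^2<k(k+1)<(k+1)^2\).
\end{itemize}
Hence no \(T_i\) is halvable, there is no rational point of order \(4\), and the proposition follows.

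The only real obstacle is bookkeeping: choosing the correct monic model, and getting the signs and factorizations of the differences right. I expect the sign observation to dispose of two of the three points immediately. The remaining point reduces to the elementary fact that \(k(k+1)\) is never a square for \(k\geqslant1\).
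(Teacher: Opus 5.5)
Your proof is correct and is essentially the paper's argument. Like the paper, you apply the halving criterion to the three rational $2$-torsion points: two are excluded because a root difference is negative, and the third because $4k^5(k+1)=(2k^2)^2k(k+1)$ is not a square. The only cosmetic difference is that the paper also translates the root $x=k^2$ to the origin, so its roots $0,\,-k^2m(2k^2-1),\,-4k^5(k+1)$ are exactly your $e_1,e_2,e_3$ shifted by $-e_1$, and all the differences you use coincide.
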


\begin{proof}
Put \(c=2k^2+2k+1\). Translating the root \(x=k^2\) to the origin to obtain $y^2=x(2k^2(x+k^2)-k^2)(c(x+k^2)-k^2)$, and then, by Lemma \ref{lem:Qiso},
putting
\[
X=2k^2c(x-k^2),\quad Y=2k^2c\,y,
\]
we obtain a curve \(\Q\)-isomorphic to \(E_k\), namely
\[
E_k':\quad Y^2=X(X+k^2c(2k^2-1))(X+4k^5(k+1)).
\]
Thus it is enough to prove the claim for \(E_k'\).

The curve \(E_k'\) has full rational \(2\)-torsion. By the halving criterion (\cref{thm:halvingCrit}) for
full rational \(2\)-torsion, a point \((e_i,0)\) is divisible by \(2\) in
\(E_k'(\Q)\) only if both differences \(e_i-e_j\) and \(e_i-e_\ell\)
are rational squares.

The three roots of the right-hand side of \(E_k'\) are
\( 0\), \( -k^2c(2k^2-1)\),\( -4k^5(k+1)\).
Since \(k\geqslant 1\), the two nonzero roots are negative. Hence neither of the
two corresponding \(2\)-torsion points can be divisible by \(2\), because its
difference with the root \(0\) is negative.

It remains to consider the point \((0,0)\). If \((0,0)\) were divisible by
\(2\), then in particular
\( 0-(-4k^5(k+1))=4k^5(k+1) \) would have to be a rational square. This is impossible, since
\( 4k^5(k+1)=4k^4\cdot k(k+1)\), and \(k(k+1)\) is not a square for \(k\geqslant 1\), since  \( k^2<k(k+1)<(k+1)^2\).
Thus no rational \(2\)-torsion point is divisible by \(2\), so
\(E_k(\Q)_{\tors}\) cannot contain
\(\mathbb Z/2\mathbb Z\times\mathbb Z/4\mathbb Z\). Consequently it also
cannot contain \(\mathbb Z/2\mathbb Z\times\mathbb Z/8\mathbb Z\).
\end{proof}

\begin{corollary}
    For the induced elliptic curve $E_k$ (and $k\geqslant 1$), the torsion can only be $E_k(\Q)_\tors \cong \Z / 2\Z \times \Z / 2\Z$ or $\Z / 2\Z \times \Z / 6\Z$.
\end{corollary}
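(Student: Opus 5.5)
The plan is to combine Mazur's classification of rational torsion with two facts: full rational $2$-torsion, and Proposition~\ref{prop:no4}. Over $\Q$, $E_k$ (equivalently the model $E_k'$) has the three rational $2$-torsion points $(0,0)$, $(-k^2c(2k^2-1),0)$, $(-4k^5(k+1),0)$. These are distinct and nonzero for $k\geqslant 1$, since $2k^2-1\geqslant 1$ and $c>0$; moreover $k^2c(2k^2-1)\neq 4k^5(k+1)$, because reducing modulo $2$ shows $c(2k^2-1)$ is odd while $4k^3(k+1)$ is even. Hence $\Z/2\Z\times\Z/2\Z\subseteq E_k(\Q)_\tors$.

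By Mazur's theorem, a torsion group over $\Q$ containing $\Z/2\Z\times\Z/2\Z$ must be $\Z/2\Z\times\Z/2m\Z$ with $m\in\{1,2,3,4\}$. I will use Proposition~\ref{prop:no4} directly, which excludes $\Z/2\Z\times\Z/4\Z$ and $\Z/2\Z\times\Z/8\Z$, i.e.\ $m=2$ and $m=4$. Only $m=1$ and $m=3$ remain, namely $\Z/2\Z\times\Z/2\Z$ and $\Z/2\Z\times\Z/6\Z$, which is the statement.

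The only point needing care is the nondegeneracy check that the three roots are distinct, so that $E_k$ really is an elliptic curve with full $2$-torsion. The parity argument above handles this, so I expect no real obstacle; the corollary is an immediate bookkeeping consequence of Mazur's list and Proposition~\ref{prop:no4}.
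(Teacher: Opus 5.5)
Your proposal is correct and follows the paper's proof. The paper also combines full rational $2$-torsion with Mazur's list $\Z/2\Z\times\Z/2m\Z$, $m\le 4$, and uses Proposition~\ref{prop:no4} to remove $m=2,4$; your parity check that the three roots are distinct is a small detail the paper leaves implicit (it also follows from the root difference $k^2(2k+1)\neq 0$).
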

\begin{proof}
    By Mazur's famous classification \cite{mazurtorzija}, since the full $2$-torsion is rational, the claim follows.
\end{proof}

We next reduce the possible occurrence of quadratic \(3\)-torsion to the
determination of rational points on a curve of genus \(3\).

\begin{lemma}\label{lem:Dminusksq-3torsion-genus3}
For an integer \(k\geqslant 1\), the curve \(E_k\) acquires a
point of order \(3\) over a field of degree at most \(2\) over \(\Q\) if and
only if there exists \(s\in\Q\), \(s^2\neq 1\), such that \((k,s)\) lies on 
\[ C_3:\, 4k^3(k+1)s^3(s+2)=(2k+1)(2s+1). \]
\end{lemma}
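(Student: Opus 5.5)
The plan is to reduce to \cref{lem:legendre-quad-3torsion}. The first step is to put \(E_k\) in Legendre form with a well-chosen \(\lambda\), using the model \(E_k'\) from the proof of \cref{prop:no4}:
\[
Y^2=X(X+A)(X+B),\qquad A=k^2c(2k^2-1),\quad B=4k^5(k+1),\quad c=2k^2+2k+1 .
\]
The key simplification is the difference
\[
A-B=k^2\bigl((2k^2+2k+1)(2k^2-1)-4k^3(k+1)\bigr)=-k^2(2k+1).
\]
So if I normalize the roots by \(u=(X+B)/(B-A)\), the root \(-B\) goes to \(0\), the root \(-A\) goes to \(1\), and the root \(0\) goes to
\[
\lambda=\frac{B}{B-A}=\frac{4k^3(k+1)}{2k+1}.
\]
As in the discussion before \cref{prop:lambda-parametrization}, \(E_k\) is then a quadratic twist of the Legendre curve \(y^2=u(u-1)(u-\lambda)\). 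Since \(k\geqslant1\), we have \(\lambda\neq 0,1\) (the curve is nonsingular), so \cref{lem:legendre-quad-3torsion} applies over \(F=\Q\).

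The lemma then says: \(E_k\) has a point of order \(3\) over a field of degree at most \(2\) if and only if there is some \(s\in\Q\), \(s^2\neq1\), with
\[
\frac{4k^3(k+1)}{2k+1}=-\frac{2s+1}{(s-1)(s+1)^3}
\quad\text{or}\quad
\frac{4k^3(k+1)}{2k+1}=\frac{2s-1}{(s-1)^3(s+1)} .
\]
The two alternatives are exchanged by \(s\mapsto -s\), exactly as in the proof for the family \(\{k-1,k+1,4k\}\), so it suffices to work with one of them.

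The remaining step is to convert this relation into the shape \(4k^3(k+1)s^3(s+2)=(2k+1)(2s+1)\) by a rational change of the parameter \(s\). The right-hand sides already have the right degree-\(4\) structure: a linear factor over a product of a simple and a triple linear factor. The cubed factor should become \(s^3\), the simple factor \(s+2\), and the numerator \(2s+1\). I would first try the shift \(s\mapsto s+1\) combined with the inversion \(s\mapsto 1/s\), or more generally a M\"obius map sending the points \(\{\pm1,\mp\tfrac12,\infty\}\) to \(\{0,-2,-\tfrac12,\infty\}\). For instance, under \(s\mapsto -1/(s+1)\) the factors \(s\pm1\), \(2s\pm1\) become linear forms whose product should reorganize into \(s^3(s+2)\) over \((2s+1)\) up to a sign. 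If the normalization \(\lambda=B/(B-A)\) does not land on the right branch, I would use another of the six cross-ratios \(\lambda,1-\lambda,1/\lambda,\dots\) of the same root triple. All of them describe the same curve, so \cref{lem:legendre-quad-3torsion} holds for each, and one of them will match \(C_3\) directly after clearing denominators.

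I expect this bookkeeping to be the main obstacle: finding the precise ordering of roots and M\"obius substitution, and checking that it is a bijection of \(\Q\cup\{\infty\}\) that identifies the excluded values \(s=\pm1\) on both sides. The degenerate values need separate care. Values of the new parameter where the substitution has a pole, or where the curve \(C_3\) degenerates (such as \(s=0\), \(s=-2\), \(s=-\tfrac12\), which force \(2k+1=0\) or \(k(k+1)=0\)), must be shown to give no valid \((k,s)\) with \(k\geqslant1\). Once that is done, the "if and only if" transfers verbatim from \cref{lem:legendre-quad-3torsion}.
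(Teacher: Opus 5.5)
Your reduction is the same as the paper's (Legendre form of \(E_k'\), then \cref{lem:legendre-quad-3torsion}, then clearing denominators). However, the proposal stops at the one computation that is the actual content of the lemma, and the concrete substitution you suggest does not work. Write \(P=4k^3(k+1)\), \(r=2k+1\), and \(t\) for the parameter of \cref{lem:legendre-quad-3torsion}. With your normalization \(\lambda=B/(B-A)=P/r\), clearing denominators in \(\lambda=-(2t+1)/((t-1)(t+1)^3)\) gives \(P(t-1)(t+1)^3=-r(2t+1)\), which is not \(C_3\). Your trial map \(s\mapsto -1/(s+1)\), read in either direction, turns this into \(Ps^3(s+2)=r(s-1)(s+1)^3\) or \(P(2s+1)=rs^3(s+2)\), and neither is \(C_3\). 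The closing claim that some cross-ratio ``will match \(C_3\) directly'' is exactly what needs proof, so as written the equivalence is not established.

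The missing step is short. The paper normalizes by \(u=-X/A\), so the third root goes to \(\lambda_k=B/A=P/(P-r)\). This uses \((2k^2-1)(2k^2+2k+1)=P-r\), which is just your computation of \(A-B\). Then \(\lambda_k=-(2s+1)/((s-1)(s+1)^3)\) becomes \(P\bigl((s-1)(s+1)^3+2s+1\bigr)=r(2s+1)\). The identity \((s-1)(s+1)^3+2s+1=s^3(s+2)\) then gives \(C_3\) with the same \(s\) and the same condition \(s^2\neq1\). The cross-multiplication is reversible because \(P\neq r\) (parity) and \(s\neq\pm1\).

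If you prefer to keep your \(\lambda=P/r=\lambda_k/(\lambda_k-1)\), you can match orders of zeros and poles. The simple zero \(-\tfrac12\) and triple zero \(\infty\) must be fixed, the triple pole \(t=-1\) must go to \(s=0\), and the simple pole \(t=1\) must go to \(s=-2\). This forces the affine involution \(t=-s-1\) (not an inversion), under which \(-(2t+1)/((t-1)(t+1)^3)=(2s+1)/(s^3(s+2))\). The bookkeeping is then harmless:
\begin{itemize}
\item \(t=\pm1\) corresponds to \(s\in\{0,-2\}\), which give no points of \(C_3\) for \(k\geqslant1\).
\item \(s=\pm1\) corresponds to \(t\in\{0,-2\}\), where the parametrization gives \(\lambda=1\neq P/r\).
\end{itemize}
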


\begin{proof}
Put \(c=2k^2+2k+1\). As in the proof above, after translating the root
\(x=k^2\) to the origin and applying Lemma~\ref{lem:Qiso}, the curve \(E_k\)
is \(\Q\)-isomorphic to
\(E_k': Y^2=X(X+k^2c(2k^2-1))(X+4k^5(k+1))\).

The Legendre coordinate \(u=-X/(k^2c(2k^2-1))\) sends the three roots of
\(E_k'\) to \(0\), \(1\), and
\(\lambda_k=4k^3(k+1)/((2k^2-1)(2k^2+2k+1))\). Hence \(E_k'\), and therefore
also \(E_k\), is a quadratic twist of the Legendre curve
\(y^2=u(u-1)(u-\lambda_k)\).

By Lemma~\ref{lem:legendre-quad-3torsion}, the curve \(E_k\) acquires a point
of order \(3\) over an extension of degree at most \(2\) if and only if
\(\lambda_k\) is of one of the two forms appearing there. The two forms are
interchanged by replacing \(s\) by \(-s\), so it is enough to consider
\(\lambda_k=-(2s+1)/((s-1)(s+1)^3)\). Substituting the value of \(\lambda_k\)
and using
\((2k^2-1)(2k^2+2k+1)=4k^3(k+1)-(2k+1)\), we obtain, after simplification,
\(4k^3(k+1)s^3(s+2)=(2k+1)(2s+1)\). This gives the claimed equation for \(C_3\).
Conversely, reversing the calculation gives one of the two parametrizations
from Lemma~\ref{lem:legendre-quad-3torsion}, and hence gives a point of order
\(3\) over \(\Q\) or over a quadratic field.
\end{proof}

In the following proposition, we show that the only (affine) rational \(k\)-coordinates on \(C_3\) are \[ k\in \left\{ 0, -\frac 12, -\frac 23, -\frac 34, -1\right\}.\]

\begin{proposition}\label{prop:C3Q}
    Let \(C_3\) be the smooth projective model of the curve
    \[ C_3:\, 4k^3(k+1)s^3(s+2)=(2k+1)(2s+1). \]
    Then \(C_3\) has exactly eight rational points, the six affine points \( (k,s)\) and two at infinity:
\[ C_3(\Q) = \left\{
\left(0,-\frac12\right),\,
\left(-\frac12,0\right),\,
\left(-\frac12,-2\right),\,
\left(-\frac23,-\frac32\right),\,
\left(-\frac34,-\frac23\right),\,
\left(-1,-\frac12\right),\,
\infty_k,\,
\infty_s
\right\}.\]
\end{proposition}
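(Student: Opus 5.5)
The plan is to treat \(C_3\) exactly as the curve \(C\) was treated in the previous section: find a convenient model, compute its Mordell--Weil group, and finish with Chabauty--Coleman plus the Mordell--Weil sieve. If possible, I would shortcut this by mapping \(C_3\) to a simpler curve.

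\emph{Geometry and model.} The equation has bidegree \((4,4)\) in \((k,s)\), so I would regard \(C_3\) as a curve on \(\mathbb P^1\times\mathbb P^1\) and let Magma compute its singularities and geometric genus. I expect the genus to be \(3\), which matches the announcement before Lemma~\ref{lem:Dminusksq-3torsion-genus3}.

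The shape of the equation suggests where the structure lies. Put \(h(t)=t^3(t+2)/(2t+1)\). Then \(C_3\) reads \(4h(k)\,h(s)=1\), which is a fibre product of two degree-\(4\) covers of \(\mathbb P^1\). The corresponding points at infinity are \(\infty_k\) (lying over \(s\) with \(s^3(s+2)=0\)) and \(\infty_s\) (symmetrically).

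\emph{Search for a quotient.} First I would look for automorphisms of \(C_3\) over \(\Q\), for example one coming from a Möbius symmetry of \(h\), or look for a morphism to a curve of smaller genus. If some quotient is an elliptic curve of rank \(0\), or a genus-\(2\) curve whose Jacobian has rank at most \(1\), the rational points of \(C_3\) can be read off from the quotient's finitely many points (via Chabauty in the genus-\(2\) case).

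Failing that, I would find a hyperelliptic or plane-quartic model directly. One route is to view the equation as a quartic in \(s\) over \(\Q(k)\); another is to use the canonical embedding. The announced list of \(k\)-values suggests that the relevant map is the projection to \(k\).

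\emph{Mordell--Weil and Chabauty.} Next I would bound \(\operatorname{rank} J(\Q)\) by \(2\)-descent, or on a plane-quartic model by a descent or reduction argument. The known points \(\left(0,-\frac12\right),\dots,\infty_s\) give divisor classes, and I would check that their regulator is nonzero, aiming for \(\operatorname{rank} J(\Q)=2<3\). The torsion of \(J(\Q)\) would be bounded, as before, by the gcd of \(\#J(\F_q)\) over several good primes. Then I would choose a good prime \(p\) where \(C_3(\F_p)\) is small, compute an annihilating differential by Coleman integration, and list the zeros of \(\eta\) in each residue disk.

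\emph{Finishing and main obstacle.} The disks containing no known rational point would be eliminated with the Mordell--Weil sieve. For this I would reuse the routines in MWS\textunderscore and\textunderscore saturation.m, choosing the modulus \(m\) as the odd part of \(\#J(\F_p)\) and picking auxiliary primes \(q\) with large \(\gcd(m,\#J(\F_q))\). Saturation of \(\Gamma\) at the primes \(\ell\mid m\) would be checked as in the previous section.

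I expect the main obstacle to be computational. Either the plane-quartic (non-hyperelliptic) model makes the \(2\)-descent and Coleman integration less readily available, or the rank turns out to equal \(3\). In that case I would fall back on the quotient approach, or on an étale descent: factor the discriminant in \(s\) to produce twists covering \(C_3\), each mapping to a curve of low rank.
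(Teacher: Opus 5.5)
Your plan is essentially the paper's proof: Chabauty--Coleman using \(\operatorname{rank} J(\Q)=2<3\), then the Mordell--Weil sieve with a saturation check. The paper uses \(p=7\), modulus \(m=33\), saturation at \(\ell=3,11\), and sieving primes \(31,37\).

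The one step you leave open, the choice of model, is settled by the substitution \(z=ks\). It turns the equation into \((8z^3-2)k^2+(4z^4+8z^3-4z-1)k+(4z^4-2z)=0\), which is quadratic in \(k\). Its discriminant gives the model \(w^2=16z^8-64z^7+64z^6-32z^5+24z^4-16z^3+16z^2-8z+1\), so \(C_3\) is hyperelliptic and the plane-quartic obstacle you anticipate does not arise.

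Separately, your rewriting of \(C_3\) as \(4h(k)h(s)=1\) is wrong. The \(k\)-factor is \(k^3(k+1)/(2k+1)\), not \(k^3(k+2)/(2k+1)\), so there is no \(k\leftrightarrow s\) symmetry for the quotient search to exploit. This slip does not affect your main route.
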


\begin{proof}
Putting \(z=ks\), the defining equation becomes quadratic in \(k\), so taking its discriminant gives a hyperelliptic model
\( w^2 = 16z^8 - 64z^7 + 64z^6 - 32z^5 + 24z^4 - 16z^3 + 16z^2 - 8z + 1\).

    We apply the Chabauty--Coleman method with Mordell--Weil sieve, completely analogously to the proof of \cref{thm:points}. A {\sc Magma} computation shows that \(C_3\) has genus \(3\), and a point search finds the listed points. The Jacobian \(J\) has rank at most \(2\),
and two divisor classes supported on these points have nonzero regulator, so they generate a finite-index subgroup \(\Gamma \subseteq J(\Q)\).

For choosing the primes to work with, we computed the structure of the finite abelian groups \(J(\F_p)\) of our curve (for good primes \(p \leqslant 100\)). This shows that the torsion order divides \(16\). Moreover, since
\( 33\mid \#J(\F_p) \) for primes \(p=7, 31, 37, 73\), for the Chabauty--Coleman method we chose the prime \(p=7\). The computation leaves four residue disks not containing any known rational points. Taking \(m=33\), the odd part of \(\#J(\F_7)=528\), we verify \(\ell\)-saturation of \(\Gamma\) for \( \ell = 3, 11\). For auxiliary sieving primes, we chose \(q=31, 37, 73\), but then only \(31\) and \(37\) turned out to be sufficient. The code is available at \href{https://github.com/NikolaAdzaga/TorsionRegularTriples/blob/main/CC_MWS_final_curve.m}{CC\textunderscore MWS\textunderscore final\textunderscore curve.m}.
\end{proof}

    \begin{corollary}
        An elliptic curve \(E_k\) induced by a \(D(-k^2)\)-triple \( \{1, 2k^2, 2k^2+2k+1\} \) has torsion subgroup \( E_k(\Q)_\tors \cong \Z/2\Z \times \Z/2\Z\) for all integers \(k\geqslant 1\).
     Even for all rational \(k\), \(E_k(\Q)_\tors \not\cong \twotimessix\). Moreover, for \(k\in\Q \backslash \{0, -1/2, -1\}\), if \(E_k(K)_\tors \cong \twotimessix\) for a quadratic field \(K\), then \(k\in \{-2/3, -3/4\}\) and \(K=\Q(i)\).
    \end{corollary}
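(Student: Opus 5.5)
The corollary has three parts, and all of them come from combining Lemma~\ref{lem:Dminusksq-3torsion-genus3} with Proposition~\ref{prop:C3Q}. The central observation is this. If \(E_k\) has a point of order \(3\) over \(\Q\) or over a quadratic field \(K\), then \((k,s)\in C_3(\Q)\) for some \(s\in\Q\) with \(s^2\neq 1\). By Proposition~\ref{prop:C3Q}, the only affine rational points of \(C_3\) have \(k\in\{0,-\tfrac12,-\tfrac23,-\tfrac34,-1\}\).

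For the first claim, take an integer \(k\geqslant 1\). This \(k\) is not in the list above, so \(E_k\) has no rational point of order \(3\). The preceding corollary (Mazur together with Proposition~\ref{prop:no4}) leaves only \(\Z/2\Z\times\Z/2\Z\) and \(\Z/2\Z\times\Z/6\Z\), and the second of these needs a point of order \(3\). Hence \(E_k(\Q)_\tors\cong\Z/2\Z\times\Z/2\Z\).

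For arbitrary rational \(k\), I would argue as follows.
\begin{itemize}
\item The values \(k=0,-\tfrac12,-1\) make \(E_k\) singular. Indeed \(\lambda_k=4k^3(k+1)/((2k^2-1)(2k^2+2k+1))\) equals \(0\) at \(k=0,-1\). At \(k=-\tfrac12\) we have \(2k+1=0\), so \(\lambda_k=1\) by the identity \((2k^2-1)(2k^2+2k+1)=4k^3(k+1)-(2k+1)\). These values are excluded.
\item For the remaining candidates \(k=-\tfrac23\) and \(k=-\tfrac34\), I would exhibit the \(3\)-torsion explicitly. Take the \(s\) from Proposition~\ref{prop:C3Q}, put \(u=1/(1-s^2)\), and compute \(f(u)=u(u-1)(u-\lambda_k)\) times the twisting factor. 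I expect \(f(u)\) to equal \(-1\) times a square, which makes the point of order \(3\) defined over \(\Q(i)\) and not over \(\Q\).
\item I would also run a direct check of \(E_k(\Q)_\tors\) for these two values, e.g.\ with Magma.
\end{itemize}
This rules out \(\twotimessix\) over \(\Q\) for every rational \(k\).

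For the quadratic statement, suppose \(E_k(K)_\tors\cong\twotimessix\) with \(K\) quadratic. Then \(E_k\) has a point of order \(3\) over \(K\). By Lemma~\ref{lem:Dminusksq-3torsion-genus3} and Proposition~\ref{prop:C3Q}, \(k\in\{-\tfrac23,-\tfrac34\}\) once the singular values are removed. For these two \(k\) we must identify \(K\).
\begin{itemize}
\item Run over all rational roots \(x_0\) of \(\psi_3\) (equivalently all \(s\), including the \(s\mapsto -s\) form).
\item Compute the square class of \(f(x_0)\) in \(\Q^\times/\Q^{\times 2}\). The expected answer is \(-1\), giving \(K=\Q(i)\).
\item By the proof of Lemma~\ref{lem:quadratic-3torsion-psi3}, every quadratic \(3\)-torsion point arises as \((x_0,\sqrt{f(x_0)})\) with \(x_0\) such a root, so \(K\) is forced.
\end{itemize}

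The main obstacle is this last bookkeeping step. One must make sure that each of \(k=-\tfrac23,-\tfrac34\) yields only one square class. If \(\psi_3\) had two rational roots with different square classes, there would be two candidate fields, and the claim would need that \(\twotimessix\) actually occurs only over \(\Q(i)\). I would settle this by factoring \(\psi_3\) for these two \(k\) and computing \(E_k(\Q(i))_\tors\) directly.
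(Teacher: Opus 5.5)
Your proposal is correct and follows the same route as the paper: Proposition~\ref{prop:C3Q} leaves only $k\in\{-2/3,-3/4\}$ once the degenerate values $0,-1/2,-1$ are discarded, and a direct computation at these two values (the twisted cubic at $u=1/(1-s^2)$ is $-1$ times a rational square in both cases) gives $K=\Q(i)$. The bookkeeping issue you flag is already settled by Proposition~\ref{prop:C3Q}: for fixed $\lambda_k$, each rational root $u$ of $\psi_3$ determines a unique $s$ with $(k,s)\in C_3(\Q)$, and each of these two values of $k$ carries exactly one such $s$, so only one square class can occur.
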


    \begin{proof}
        \(E_k\) has full rational \(2\)-torsion. After discarding the values \(k=0, -1/2, -1\) (for which our starting triple degenerates to a pair), Proposition \ref{prop:C3Q} shows that the only rational \(k\) for which \(3\)-torsion can occur over a field of degree at most \(2\) are \(k=-2/3, -3/4\). Direct computation (at the end of the same \href{https://github.com/NikolaAdzaga/TorsionRegularTriples/blob/main/CC_MWS_final_curve.m}{source file} as above) shows that in both cases the field is \(\Q(i)\), with torsion \(\twotimessix\), while over \(\Q\) the torsion remains \(\Z/2\Z \times \Z/2\Z\).
        \end{proof}

We next study the generic rank of this family.
Let \(K=\Q(k)\), and consider \(E\) as an elliptic curve over \(K\).
Putting
\(x=\frac{X}{2k^2(2k^2+2k+1)}+k^2\) and
\(y=\frac{Y}{2k^2(2k^2+2k+1)}\), the curve becomes
\(E':\ Y^2=X^3+A(k)X^2+B(k)X\), where
\(A(k)=8k^6+8k^5-2k^3-k^2\) and
\(B(k)=4k^7(k+1)(2k^2-1)(2k^2+2k+1)\). Moreover,
\(A(k)^2-4B(k)=k^4(2k+1)^2\).

To determine its generic rank, we use the following specialization criterion of Gusić and Tadić \cite{GusicTadic} (see \cite[Theorem 3.1]{GusicTadicGlasnik} for the original formulation).

\begin{theorem}[{\cite[Theorem 1.1]{GusicTadic}}]\label{thm:GusicTadic}
Let \(E\) be a nonconstant elliptic curve over \(\mathbb{Q}(t)\), given by the equation
\[
E=E(t):\quad y^2=(x-e_1)(x-e_2)(x-e_3),
\quad e_1,e_2,e_3\in \mathbb{Z}[t].
\]
Assume that \(t_0\in\mathbb{Q}\) satisfies the following condition.

\begin{enumerate}
\item[\textnormal{(GT)}] For every nonconstant square-free divisor \(h\in\mathbb{Z}[t]\) of
\[
(e_1-e_2)(e_1-e_3),\quad
(e_2-e_1)(e_2-e_3),\quad
\text{or}\quad
(e_3-e_1)(e_3-e_2),
\]
the rational number \(h(t_0)\) is not a square in \(\mathbb{Q}\).
\end{enumerate}

\noindent Then the specialization homomorphism
\( \sigma:E(\mathbb{Q}(t))\longrightarrow E(t_0)(\mathbb{Q}) \)
is injective.
\end{theorem}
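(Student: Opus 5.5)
The plan is to prove that the kernel of \(\sigma\) is trivial by combining the complete \(2\)-descent homomorphism with the fact that \(E(\Q(t))\) is finitely generated.

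Since \(E\) is nonconstant, the Lang--Néron theorem gives that \(E(\Q(t))\) is finitely generated. I expect (GT) to guarantee that \(t_0\) gives a nonsingular fibre: it should force \(e_i(t_0)\neq e_j(t_0)\), since otherwise a factor \(h\) of \(e_i-e_j\) would vanish at \(t_0\) and \(h(t_0)=0\) is a square. This needs checking, but granting it, \(\sigma\) is well defined. It is also injective on \(E(\Q(t))[2]\), because the three points \((e_i,0)\) go to three distinct points \((e_i(t_0),0)\).

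Over any field \(F\supseteq\Q\), consider the descent map
\[
\theta_F:E(F)\to (F^\times/F^{\times2})^3,\qquad (x,y)\mapsto (x-e_1,\,x-e_2,\,x-e_3),
\]
with the usual correction at the points \((e_i,0)\), where the \(i\)-th entry is replaced by \((e_i-e_j)(e_i-e_k)\). This is a homomorphism with kernel \(2E(F)\), which is exactly the content of \cref{thm:halvingCrit}. Specialization at \(t_0\) commutes with these maps, \(\theta_\Q\circ\sigma=\mathrm{ev}_{t_0}\circ\theta_{\Q(t)}\).

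Next I would use the standard ``support'' property of \(\theta_{\Q(t)}\). Writing \(x=U/W^2\) in lowest terms, each coordinate of \(\theta_{\Q(t)}(P)\) has a representative of the form \(c\cdot h\), where \(c\in\Q^\times\) and \(h\in\Z[t]\) is a square-free divisor of \((e_i-e_j)(e_i-e_k)\). This holds because \(\prod_i(U-e_iW^2)\) is a square and any common prime factor of \(U-e_iW^2\) and \(U-e_jW^2\) divides \(e_i-e_j\).

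Now let \(P\in\ker\sigma\). Then \(\theta_\Q(\sigma(P))\) is trivial, so each \(c_ih_i(t_0)\) is a square in \(\Q\). I want (GT) to force each \(h_i\) to be constant. This is the step I expect to be the \textbf{main obstacle}. The hypothesis is phrased for divisors \(h\in\Z[t]\), so the constant \(c_i\) must be absorbed into \(h_i\) using the freedom of integer content and of multiplying by squares, or be killed by a separate argument.

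I would first try the following. The relation \(\prod_i(x-e_i)=y^2\) gives a product relation among the three classes, and the classes for \(P\) can be taken with the same normalization at every fibre. This should let one reduce to the case where each coordinate is a primitive square-free divisor \(h_i\) up to sign. Then \(h_i(t_0)\in\Q^{\times2}\) together with (GT) forces \(h_i\) to be constant, and constant equal to a square. Hence \(\theta_{\Q(t)}(P)\) is trivial, and so \(P=2Q\) with \(Q\in E(\Q(t))\).

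Since \(2\sigma(Q)=0\), the point \(\sigma(Q)\) is \(2\)-torsion. By the bijection on \(2\)-torsion, there is \(T\in E(\Q(t))[2]\) with \(\sigma(T)=\sigma(Q)\). Then \(Q'=Q-T\in\ker\sigma\) and \(P=2Q'\). Iterating, \(P\) is divisible by every power of \(2\) inside \(\ker\sigma\).

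In a finitely generated abelian group, such a point is torsion of odd order. It remains to exclude odd-order torsion from the kernel. If \(P\neq O\) has odd prime order \(\ell\), then \(x(P)\in\Q(t)\) is a root of \(\psi_\ell\), and \(\sigma(P)=O\) means the specialization of \(x(P)\) has a pole at \(t_0\). That is impossible on a nonsingular fibre: the torsion of a smooth fibre embeds under reduction away from characteristic dividing its order, or one can argue via the formal group at \(t=t_0\). Hence \(P=O\) and \(\sigma\) is injective.
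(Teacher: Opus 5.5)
The paper gives no proof of this statement; it only quotes it from Gusi\'c--Tadi\'c. Most of your reduction is sound:
\begin{itemize}
\item the bijection $E(\Q(t))[2]\to E(t_0)(\Q)[2]$;
\item the lift $P=2Q'$ with $Q'\in\ker\sigma$;
\item finite generation of $E(\Q(t))$;
\item torsion-freeness of $\ker\sigma$. This can be made crisp: $\psi_\ell$ has coefficients in $\Z[t]$ and constant leading coefficient $\ell$, so the $x$-coordinate of an $\ell$-torsion point lies in $\Q[t]$ and has no pole at $t_0$.
\end{itemize}
These reduce everything to showing $\ker\sigma\subseteq 2E(\Q(t))$.

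That step has a genuine gap, exactly where you flagged it. Your support property is proved over $\Q[t]$, so it only gives $x(P)-e_i\equiv c_ih_i$ with an uncontrolled $c_i\in\Q^\times$. (GT) says nothing about $c_ih_i(t_0)$ when, say, $c_i=5$ and $5\nmid(e_i-e_j)(e_i-e_k)$. The relation $\prod_i(x-e_i)=y^2$ imposes only one condition on the three constants, so it cannot pin them down. Your proposed normalization to primitive $h_i$ up to sign is also false in general. Rational primes dividing the content of $(e_i-e_j)(e_i-e_k)$ do occur in these classes, already in the class of $e_i-e_j$ at the $2$-torsion point $(e_i,0)$. That is why (GT) quantifies over all square-free divisors in $\Z[t]$, not just primitive ones.

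The missing idea is to run the coprimality argument in the UFD $\Z[t]$, whose primes include the rational primes $p$.

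First, the denominator is a square. Write $x(P)=A/B$ with $A,B\in\Z[t]$ coprime. For every prime $\pi\mid B$ of $\Z[t]$ we have $\pi\nmid A-e_iB$, so $v_\pi(y^2)=-3v_\pi(B)$ and $v_\pi(B)$ is even. Hence $x(P)=U/W^2$ with $U,W\in\Z[t]$ coprime.

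Next, the odd-order primes are controlled. Take $P\in\ker\sigma\setminus\{O\}$. Such $P$ is not $2$-torsion, so no $U-e_iW^2$ vanishes. Since $\prod_i(U-e_iW^2)=(yW^3)^2$, a prime $\pi$ of $\Z[t]$ dividing $U-e_iW^2$ to odd order must divide some other $U-e_jW^2$. Then $\pi$ divides $(e_i-e_j)W^2$, and hence $e_i-e_j$, because $\pi\nmid W$. Therefore
\[
x(P)-e_i\equiv h_i \pmod{\Q(t)^{\times2}},
\]
where $h_i=\pm\prod\pi$ is itself a square-free divisor of $(e_i-e_j)(e_i-e_k)$ in $\Z[t]$, with no leftover rational constant.

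Finally, specialize. Since $\sigma(P)=O$ and the fibre is smooth, $x(P)$ has a pole of even order at $t_0$. Its leading coefficient $\alpha$ satisfies $\alpha^3=\beta^2$, where $\beta$ is the leading coefficient of $y$, so $\alpha$ is a square. Also $h_i$ is a unit at $t_0$, because it divides $(e_i-e_j)(e_i-e_k)$, which does not vanish there. Comparing leading terms in $x(P)-e_i=h_i s_i^2$ gives $h_i(t_0)\in\Q^{\times2}$.

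Now (GT) forces each $h_i$ to be constant, i.e.\ $\pm$ a square-free integer, and such an integer is a rational square only if it equals $1$. So every $x(P)-e_i$ is a square in $\Q(t)$, and $P\in 2E(\Q(t))$ by the halving criterion. With this, the rest of your argument goes through.
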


\begin{theorem}
The elliptic curve over \(\Q(k)\) induced by the
\(D(-k^2)\)-triple \(\{1,2k^2,2k^2+2k+1\}\) has rank \(0\).
Equivalently, the induced elliptic surface has generic Mordell--Weil rank \(0\).
\end{theorem}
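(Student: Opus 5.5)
We will use the model \(E'\colon Y^2=X^3+A(k)X^2+B(k)X=X(X-e_2)(X-e_3)\) over \(\Q(k)\), with the factorization already found in the proof of \cref{prop:no4}. After the \(\Q(k)\)-isomorphism (rescaling by \(2k^2(2k^2+2k+1)\)), the roots are
\[
e_1=0,\quad e_2=-k^2(2k^2+2k+1)(2k^2-1),\quad e_3=-4k^5(k+1),
\]
all in \(\Z[k]\). Since \(E'\) is \(\Q(k)\)-isomorphic to \(E\), it suffices to show \(\operatorname{rank}E'(\Q(k))=0\). The curve is nonconstant, because \(j\) depends on \(k\); this follows for instance from \(\lambda_k\) being nonconstant. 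So the plan is to find one \(k_0\in\Q\) satisfying condition (GT) of \cref{thm:GusicTadic} and for which \(\operatorname{rank}E'_{k_0}(\Q)=0\). The specialization map is then injective, so \(E'(\Q(k))\) embeds in the finite group \(E'_{k_0}(\Q)\), and the generic rank is \(0\).

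First I compute the three products \((e_i-e_j)(e_i-e_\ell)\) and factor them in \(\Z[k]\).
\begin{itemize}
\item \(e_1-e_2=k^2(2k^2-1)(2k^2+2k+1)\) and \(e_1-e_3=4k^5(k+1)\).
\item \(e_2-e_3=k^2\bigl(4k^3(k+1)-(2k^2-1)(2k^2+2k+1)\bigr)=k^2(2k+1)\), using the identity from \cref{lem:Dminusksq-3torsion-genus3}.
\end{itemize}
Up to square factors and signs, the irreducible factors involved are therefore
\[
k,\quad k+1,\quad 2k+1,\quad 2k^2-1,\quad 2k^2+2k+1,
\]
together with constants \(-1\) and \(2\). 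The nonconstant square-free divisors \(h\) are (up to a sign and possibly a factor \(2\)) products of subsets of these factors appearing in the respective products. This is a finite list of at most a few dozen polynomials. For a candidate \(k_0\) one evaluates each \(h(k_0)\) and checks that none is a rational square. Checking with all sign and factor-\(2\) variants, i.e.\ over all divisors in \(\Z[k]\), is safe and cheap.

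The main task is choosing \(k_0\). I would search over small integers and simple rationals, say \(k_0=2,3,4,\dots\) or \(k_0=1/2,\dots\). Integer \(k_0\ge1\) is natural since then \(k_0\), \(k_0+1\), \(2k_0+1\) are small integers whose products are easy to test for squareness. For each candidate satisfying (GT), compute \(\operatorname{rank}E'_{k_0}(\Q)\) in Magma by \(2\)-descent. Full \(2\)-torsion makes this especially effective; an analytic rank computation is an alternative. The expected obstacle is precisely finding a \(k_0\) where (GT) holds and the rank provably vanishes, since the descent must give an upper bound of \(0\), not merely fail to find points. If naive small values give positive rank or an inconclusive Selmer bound (Sha), the search will be extended. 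One could also use two specializations: injectivity for one \(k_0\) with rank \(r\) gives only an upper bound, so a rank-\(0\) specialization is what is needed.

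Once such a \(k_0\) is found, \cref{thm:GusicTadic} gives injectivity of \(\sigma\colon E'(\Q(k))\to E'_{k_0}(\Q)\). Since the target is finite, in fact \(\Z/2\Z\times\Z/2\Z\) by the previous corollary, \(E'(\Q(k))\) is finite. Hence \(E(\Q(k))\) has rank \(0\), which is the generic Mordell--Weil rank of the surface.
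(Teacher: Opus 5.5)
Your strategy is the paper's: the same model $Y^2=X(X+k^2(2k^2-1)(2k^2+2k+1))(X+4k^5(k+1))$, the same factor lists for (GT), and injectivity of specialization into a fibre of rank $0$. Your computation $e_2-e_3=k^2(2k+1)$ agrees with the paper. Your check that the curve is nonconstant via $\lambda_k$ is a point the paper leaves implicit.

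What your proposal leaves open is the one step that carries the proof, namely an explicit $k_0$. The paper takes $k_0=14$. There $2,\,k,\,k+1,\,2k^2-1,\,2k^2+2k+1,\,2k+1$ evaluate to $2,14,15,391=17\cdot 23,421,29$. So every product containing a nonconstant factor contains one of the primes $7,3,17,421,29$ exactly once, and all values are positive, so negated products are not squares either. Magma then certifies $\operatorname{rank}E_{14}(\Q)=0$ for $Y^2=X^3+64\,533\,196X^2+1\,041\,133\,338\,416\,640X$.

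Until such a witness is exhibited, your argument is a reduction rather than a proof. A search for $k_0$ can never succeed if the generic rank were positive, so its success is exactly what has to be shown.
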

\begin{proof}
After translating the root \(x=k^2\) to the origin and using the isomorphism from
Lemma \ref{lem:Qiso}, the curve is \(\Q(k)\)-isomorphic to
\(Y^2=X(X^2+A(k)X+B(k))\), where
\(A(k)=8k^6+8k^5-2k^3-k^2\) and
\(B(k)=4k^7(k+1)(2k^2-1)(2k^2+2k+1)\).

More explicitly, this model factors as
\[
Y^2=X(X+k^2(2k^2-1)(2k^2+2k+1))(X+4k^5(k+1)).
\]
Thus the three roots are
\( 0,\, -k^2(2k^2-1)(2k^2+2k+1), \, -4k^5(k+1)\).

The difference of the two nonzero roots is
\[
4k^5(k+1)-k^2(2k^2-1)(2k^2+2k+1)=k^2(2k+1).
\]
Put
\( p=2k^2-1,\, q=2k^2+2k+1,\, r=2k+1\).
So to confirm the condition (GT) the nonconstant squarefree divisors which have to be checked are products
of factors from the following three lists:
\[
\{2,k,k+1,p,q\},\quad \{k,p,q,r\},\quad \{2,k,k+1,r\}.
\]

We specialize at \(k=14\). The corresponding values are
\( 2, 14, 15, 391, 421, 29 \).
Since
\( 14=2\cdot 7, 15=3\cdot 5, 391=17\cdot 23\),
and \(421\) and \(29\) are prime, no nonempty product involving at least one
nonconstant factor from any of the three lists above is a square in \(\Q\).
Changing the sign gives a negative value at \(k=14\), and hence also not a square.
Therefore the specialization map
\( E(\Q(k))\to E_{14}(\Q) \)
is injective.

For \(k=14\), the specialized curve is
\[
E_{14}:\ Y^2=X^3+64\,533\,196X^2+1\,041\,133\,338\,416\,640X.
\]
\href{https://github.com/NikolaAdzaga/TorsionRegularTriples/blob/main/generic_rank_0.m}{A direct computation} in {\sc Magma} gives
\(\operatorname{rank} E_{14}(\Q)=0\). By injectivity of specialization,
it follows that \(\operatorname{rank} E(\Q(k))=0\).
\end{proof}

Thus this family has no non-torsion section over \(\Q(k)\), and none of its torsion sections gives an extension of our original \(D(-k^2)\)-triple. Hence any extension can occur only on individual fibers, not generically. This can be seen as further support for our conjecture from \cite{AF}.

\section{Open questions and concluding remarks}
We have studied the torsion (and briefly, rank) of elliptic curves induced by regular Diophantine triples. We have shown that, for regular Diophantine triples in integers, the induced elliptic curve always has \(\Z/2\Z \times \Z/2\Z\) as rational torsion subgroup. Even for regular triples and the families studied in this manuscript, some questions remain open. Is there a regular triple in integers such that the induced elliptic curve has torsion \( \twotimessix\) over some quadratic field? More broadly, which torsion subgroups can appear over quadratic fields and do they appear infinitely often? For an elliptic curve \(E_k\), induced by a \(D(-k^2)\)-triple \(\{1, 2k^2, 2k^2+2k+1\}\), is \(\Z/2\Z \times \Z/2\Z\) the only possible torsion subgroup \(E_k(\Q)_\tors\) even for rational \(k\)? Many of these questions can be answered using the same methods, or more broadly, using methods from arithmetic geometry. Such methods have already proven to be useful even in the context of the classical integral extension problems: in \cite{AFFu}, it was necessary to find the integral points on a genus-\(2\) curve with Jacobian of rank \(2\), where the classical Chabauty method does not apply. A method by Gallegos-Ruiz \cite{GallegosRuiz2010,GallegosRuiz2019} to determine integral points on hyperelliptic curves was used to complement the more frequently used methods in this field (the hypergeometric method, linear forms in logarithms).

For the \(D(-k^2)\)-triple \(\{1, 2k^2, 2k^2+2k+1\}\) in positive integers, is our conjecture (from \cite[§4]{AF}) that it can only be extended by $d=8k^2+4k+1$ correct?
Is there any Diophantine triple in positive integers such that the induced elliptic curve has \(\twotimessix\)-torsion subgroup over \( \Q\)? Let us note that this last question also appears in Dujella's extensive list of open problems on Diophantine \(m\)-tuples and elliptic curves \cite{DujeOpenProblems2026}.

\section*{Declaration of generative AI in the manuscript preparation}
The proofs of \cref{thm:no-regular-2x6} and Lemma \ref{lem:BA-lt-betaalpha} were AI-assisted: this required work with ChatGPT and Gemini in many iterations; at the time (January-February 2026), neither was capable of generating the proofs independently. The original version included an additional assumption that \(c > 2b\), which was first relaxed to \(c > \frac 43 (b-a)\), and finally completely removed. In other sections, the Magma code for the Mordell-Weil sieve was written by ChatGPT (Plus) and edited by the author and Claude. All the code was written with an emphasis on clarity, rather than efficiency. The exposition in §5 was written with the help of Claude. The entire manuscript was checked by Claude.

\section*{Acknowledgements}
The author would like to thank Andrej Dujella for useful comments and for posing a number of interesting questions, some of which are, at least partially, answered in this manuscript. The author would also like to thank Goran Dražić for numerous comments that led to the improvement of the manuscript.

This study was conducted within the framework of the institutional project of the University of Zagreb Faculty of Civil Engineering – [GEM
– Graphs, elliptic and modular curves], under the programme line for enhancing scientific excellence, financed through
the National Recovery and Resilience Plan 2021–2026. The project is funded by the European Union – NextGenerationEU.

This work is supported by the Croatian Science Foundation under the project no.~IP-2022-10-5008, and also by the project, ``Implementation of cutting-edge research and its application as part of the Scientific Center of Excellence for Quantum and Complex Systems, and Representations of Lie Algebras'', Grant No.~PK.1.1.10.0004, co-financed by the European Union through the European Regional Development Fund - Competitiveness and Cohesion Programme 2021-2027.

\printbibliography
\end{document}